\tikzset{->-/.style={decoration={
  markings,
  mark=at position 0.5 with {\arrow{stealth}}},postaction={decorate}}}
\tikzset{->>-/.style={decoration={
  markings,
  mark=at position 0.5 with {\arrow{>>}}},postaction={decorate}}}
\tikzset{snake it/.style={decorate, decoration=snake}}
\DeclareFontFamily{U}{rsfs}{\skewchar\font127 }
\DeclareFontShape{U}{rsfs}{m}{n}{%
   <-6> rsfs5
   <6-8> rsfs7
   <8-> rsfs10
}{}
\definecolor{dark-red}{rgb}{0.4,0.15,0.15}
\definecolor{dark-blue}{rgb}{0.15,0.15,0.4}
\definecolor{medium-blue}{rgb}{0,0,0.5}
\LetLtxMacro{\amsmathdots}{\dots}
\DeclareMathOperator{\tr}{Tr}
\DeclareMathOperator{\Stab}{Stab}
\DeclareMathOperator{\GL}{GL}
\DeclareMathOperator{\PGL}{PGL}
\DeclareMathOperator{\Mat}{Mat}
\DeclareMathOperator{\PConf}{PConf}
\DeclareMathOperator{\UConf}{UConf}
\DeclareMathOperator{\Frob}{Frob}
\newcommand*{\et}{\text{ét}}
\newcommand*{\dispunct}[1]{\,\text{#1}}
\newcommand{\from}{\vcentcolon}
\newcommand{\dsum}{\oplus}
\newcommand{\dSum}{\bigoplus}
\newcommand{\tensor}{\otimes}
\NewDocumentCommand\xDeclarePairedDelimiter{mmm}
 {%
  \NewDocumentCommand#1{som}{%
   \IfNoValueTF{##2}
    {\IfBooleanTF{##1}{#2##3#3}{\mleft#2##3\mright#3}}
    {\mathopen{##2#2}##3\mathclose{##2#3}}%
  }%
 }
\xDeclarePairedDelimiter{\abs}{\lvert}{\rvert}
\xDeclarePairedDelimiter{\norm}{\lVert}{\rVert}
\xDeclarePairedDelimiter{\floor}{\lfloor}{\rfloor}
\xDeclarePairedDelimiter{\ceil}{\lceil}{\rceil}
\xDeclarePairedDelimiter{\gen}{\langle}{\rangle}
\xDeclarePairedDelimiter{\pseries}{\llbracket}{\rrbracket}
\NewDocumentCommand{\set}{somm}{%
   \IfNoValueTF{#2}
    {\IfBooleanTF{#1}{\{#3 \mid #4\}}{\mleft\{ #3 \mathrel{}\middle\vert\mathrel{} #4 \mright\}}}
    {\mathopen{#2\{}#3 \mathrel{}#2\vert\mathrel{} #4\mathclose{#2\}}}%
  }
\NewDocumentCommand{\present}{somm}{%
   \IfNoValueTF{#2}
    {\IfBooleanTF{#1}{\langle#3 \mid #4\rangle}{\mleft\langle#3 \mathrel{}\middle\vert\mathrel{} #4 \mright\rangle}}
    {\mathopen{#2\langle}#3 \mathrel{}#2\vert\mathrel{} #4\mathclose{#2\rangle}}%
  }
\NewDocumentCommand{\inner}{somm}{%
   \IfNoValueTF{#2}
    {\IfBooleanTF{#1}{\langle#3 , #4\rangle}{\mleft\langle#3 , #4 \mright\rangle}}
    {\mathopen{#2\langle}#3 , #4\mathclose{#2\rangle}}%
  }
\newcommand{\CC}{\mathbb{C}}
\newcommand{\FF}{\mathbb{F}}
\newcommand{\PP}{\mathbb{P}}
\newcommand{\QQ}{\mathbb{Q}}
\newcommand{\RR}{\mathbb{R}}
\newcommand{\cH}{\mathcal{H}}
\newcommand{\cU}{\mathcal{U}}
\newcommand{\fS}{\mathfrak{S}}
\newcolumntype{C}{>{\raggedright\arraybackslash}X}
\newcommand*{\widebar}[1]{\mkern 1.5mu\overline{\mkern-1.5mu#1\mkern-1.5mu}\mkern 1.5mu}
\newcommand*{\cl}[1]{
\begingroup
    \setbox\z@=\hbox{\ensuremath{#1}}%
    \ifdimgreater{\wd\z@}{4em}{\mleft(#1\mright)^{-}}{\widebar{#1}}
\endgroup
}
\newcommand*{\interior}[1]{
\begingroup
    \setbox\z@=\hbox{\ensuremath{#1}}%
    \ifdimgreater{\wd\z@}{1.5em}{\mleft(#1\mright)^{\circ}}{\accentset{\circ}{#1}}
\endgroup
}
\newcommand\isom{\xrightarrow{\,\smash{\raisebox{-0.6ex}{\ensuremath{\sim}}}\,}}
\numberwithin{equation}{section}
\declaretheorem[sibling=equation]{theorem}
\declaretheorem[sibling=theorem,style=remark]{example}
\declaretheorem[sibling=theorem,style=definition]{definition}
\declaretheorem[sibling=theorem]{lemma}
\declaretheorem[sibling=theorem]{corollary}
\declaretheorem[sibling=theorem]{proposition}
\declaretheorem[sibling=theorem, style=remark]{remark}
\declaretheorem[numbered=no, title=Theorem]{theorem*}
\declaretheorem[numbered=no, title=Corollary]{corollary*}
\newcommand*{\cV}{\mathcal{V}}
\newlist{singularity}{enumerate}{2}
\setlist[singularity,1]{label=(\Roman*),noitemsep, ref=\Roman*}
\setlist[singularity,2]{label=(\alph*),noitemsep, ref=\alph*}
\newcommand*{\type}[1]{\text{\ref{#1}}}
\def\paragraph{\@startsection{paragraph}{4}%
  \z@\z@{-\fontdimen2\font}%
  {\normalfont\bfseries}}
\begin{document}

\title{Cohomology of the universal smooth cubic surface}
\author{Ronno Das}
\address{Department of Mathematics, University of Chicago, Chicago, IL 60637, USA}
\email{ronno@math.uchicago.edu}
\subjclass[2010]{Primary 55R80; Secondary 14J10, 14J70}

\begin{abstract}
We compute the rational cohomology of the universal family of smooth cubic surfaces using Vassiliev's method of simplicial resolution.
Modulo embedding, the universal family has cohomology isomorphic to that of $\PP^2$.
A consequence of our theorem is that over the finite field $\FF_q$, away from finitely many characteristics, the average number of points on a smooth cubic surface is $q^2 + q + 1$.
\end{abstract}

\maketitle

\section{Introduction}

A \emph{cubic surface} $S \subset \PP^3 = \CC \PP^3$ is the zero set $S = \cV(F)$ of a homogeneous polynomial $F$ of degree $3$ in $4$ variables.
The surface $S$ is singular (i.e.\ not smooth) if and only if the $20$ coefficients of $F$ are a zero of a discriminant polynomial $\Delta \from \CC^{20} \to \CC$.
Thus the space of smooth cubic surfaces is an open locus $M = M_{3,3} \vcentcolon = \PP^{19} \setminus \cV(\Delta)$.

The \emph{incidence variety}
\[U = \set{(S,p)}{p \in S} \subset M \times \PP^3\]
of points and cubic surfaces (see \cref{main-definition}) is a subvariety $U \subset M \times \PP^3$ and the canonical projection map $U \to M$ is a fiber bundle, whose fiber over $S \in M$ is exactly $S \subset \PP^3$.
This is the `universal family' of cubic surfaces with embeddings in $\PP^3$, in the sense that a family of embedded smooth cubic surfaces corresponds to a pullback of this bundle by a map to $M$.

The automorphism group of $\PP^3$ is $\PGL(4, \CC)$ and this group takes cubic surfaces to cubic surfaces, preserving smoothness.
In particular the projection map $\pi \from U \to M$ is $\PGL(4, \CC)$-equivariant.
Vassiliev showed (in \cite{Vassiliev99}) that the space $M$ has the same rational cohomology as $\PGL(4,\CC)$ and it following results of Peters--Steenbrink (\cite{PS03}) this isomorphism is induced by the orbit map given by $g \mapsto g(S_0)$, for any choice of $S_0 \in M$ (see~\cref{base-cohomology}).
See also Tommasi (\cite{Tommasi14}).

The main result of this paper is that the rational cohomology of $U$ is isomorphic to that of $\PGL(4,\CC) \times \CC \PP^2$.
\begin{theorem}[Cohomology of the universal smooth cubic]\label{main-theorem}
Let $\eta \in H^2(\CC \PP^3; \QQ)$ be the hyperplane class.
Let $\iota \from U \to M \times \CC \PP^3$ be the inclusion map.
Then $\iota^*(1 \tensor \eta^3) = 0$ and the induced map
\[H^*(M \times \CC \PP^3; \QQ)/(\eta^3) \to H^*(U; \QQ)\]
is an isomorphism.
In particular, with rational coefficients,
\[H^*(U) \cong H^*(M \times \CC \PP^2) \cong H^*(\PGL(4,\CC)) \tensor H^*(\CC \PP^2) \cong \QQ[\alpha_3, \alpha_5, \alpha_7, \eta]/(\alpha_3^2, \alpha_5^2, \alpha_7^2, \eta^3)\]
where $\alpha_i \in H^i(\PGL(4,\CC); \QQ)$.
Since the inclusion map is algebraic, each isomorphism is an isomorphism of mixed Hodge structures.
In particular, $H^k(U; \QQ)$ is pure of Tate type; each generator $\alpha_{2k-1}$ is of bidegree $(k,k)$ and $\eta$ is of bidegree $(1,1)$.
\end{theorem}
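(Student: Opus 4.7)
My plan is to adapt Vassiliev's method of simplicial resolution (as used in \cite{Vassiliev99} for the base $M$) to the incidence setting, by computing the cohomology of $U$ as the complement of a suitable closed subvariety in a nice ambient space. First I would introduce the auxiliary space $\hat U = \set{(S,p) \in \PP^{19} \times \PP^3}{p \in S}$, dropping the smoothness condition. The second projection $\hat U \to \PP^3$ realizes $\hat U$ as the projectivization of the rank-$19$ kernel bundle of the evaluation surjection $\cO_{\PP^3}^{20} \surjection \cO_{\PP^3}(3)$, so $\hat U$ is a $\PP^{18}$-bundle over $\PP^3$ and its cohomology is determined by the projective bundle formula. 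Then $U = \hat U \setminus \tilde\Sigma$, where $\tilde\Sigma = \set{(S,p) \in \hat U}{S \text{ is singular}}$ is the ``incidence discriminant.'' Computing $H^*(U; \QQ)$ thereby reduces, via the long exact Borel--Moore sequence for $(\hat U, U)$ (equivalently, Alexander--Lefschetz duality in the closed complex manifold $\hat U$), to computing $H^{BM}_*(\tilde\Sigma; \QQ)$.

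The central step is the construction of a Vassiliev-type simplicial resolution $\pi \from |\tilde\Sigma| \to \tilde\Sigma$ that is a proper cohomological equivalence, filtered by strata indexed by the combinatorial and geometric type of the singular set $\Sing(S)$ --- simple nodes, cusps, higher isolated singularities, and non-reduced or positive-dimensional singular loci (lines, planes, components of reducible cubics). Each stratum is fibered over a configuration space in $\PP^3$ together with the incidence datum $p \in S$, and this filtration produces a spectral sequence converging to $H^{BM}_*(\tilde\Sigma; \QQ)$. Compared with Vassiliev's and Tommasi's analysis of the discriminant $\Sigma$ in $\PP^{19}$, the novelty here is the extra marked point $p$, which adds a bundle factor to each stratum; the relation $\iota^*(1 \tensor \eta^3) = 0$ and the effective appearance of $H^*(\PP^2)$ (rather than $H^*(\PP^3)$) should emerge from tracking this factor through the resolution and exploiting the fact that $(\eta|_S)^3 = 0$ for dimensional reasons on any cubic surface.

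To finish, I would combine the spectral sequence output with the projective bundle description of $H^*(\hat U; \QQ)$ to read off $H^*(U; \QQ)$, and identify the surviving classes with pullbacks of $\alpha_3, \alpha_5, \alpha_7 \in H^*(\PGL(4,\CC); \QQ)$ from $M$ and powers of $\eta$ from $\PP^3$. The mixed Hodge structure statement should then come automatically, since every step (simplicial resolution, Borel--Moore duality, stratum contributions) is compatible with the weight filtration on algebraic varieties; each generator $\alpha_{2k-1}$ arises from a Gysin-type class of weight $(k,k)$.

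The main obstacle will be controlling the Vassiliev spectral sequence: the list of possible singular configurations for cubic surfaces is long, and most stratum contributions must cancel --- either against each other or against the ambient $H^*(\hat U)$ --- leaving only the expected classes. Much of this can be transported from the case of $M$, with careful bookkeeping of the extra incidence factor; nonetheless, precisely identifying which cancellations account for the loss of $\eta^3$, and verifying that no additional exotic classes survive, is where the real work lies.
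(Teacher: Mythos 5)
Your high-level strategy (duality against an incidence discriminant plus a Vassiliev-type simplicial resolution) is the right family of ideas, but as written the proposal has a genuine gap at exactly the decisive point: why $\eta^3$ dies in $H^*(U;\QQ)$. The fact that $(\eta|_S)^3 = 0$ on every fiber $S$ of $U \to M$ only says that $\eta^3$ restricts to zero on fibers, i.e.\ lies in positive Leray filtration; it does not show $\iota^*(1 \tensor \eta^3) = 0$ in $H^6(U)$, and no step of your outline supplies this. In the paper the vanishing is the crux, and it is \emph{not} extracted from the resolution alone: the resolution is run fiberwise over the \emph{other} projection $U \to \PP^3$ (computing $H^*(X_p)$, hence $H^*(U_p) \cong H^*(S^3 \times S^5 \times S^5)$, with the ``subtypes'' recording the position of $p$ relative to the singular locus --- this is where the mass of cancellation happens, via vanishing of $H^*(A_i;\pm\QQ)$ for the degenerate configurations), and then the Serre spectral sequence for $U \to \PP^3$ still leaves an ambiguity: $d_4$ is forced to vanish, but $d_6 \from H^5(U_p) \to H^6(\PP^3)$ could have rank $0$ or $1$. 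Resolving it requires group-theoretic input with no counterpart in your plan: the orbit-map surjection $H^*(X_p) \surjection H^*(\GL(3))$ (\cref{surjection}), the Peters--Steenbrink generalization of Leray--Hirsch, and \cref{base-cohomology} ($H^*(M) \cong H^*(\PGL(4))$), which together force the Poincar\'e polynomial of $U$ to be divisible by $1+t^7$; that divisibility is what makes $d_6$ have rank $1$ and kills $\eta^3$. These same inputs also give the ring structure, which a Borel--Moore/Alexander-duality computation by itself does not.

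Your proposed global resolution of $\tilde\Sigma \subset \hat U$ would face the same ambiguity in a different guise: after all cancellations you would be left to decide between the two candidate answers (effectively $H^*(\PGL(4)) \tensor H^*(\PP^2)$ versus an alternative with Poincar\'e polynomial $(1+t^3)(1+t^5)^2(1+t^2+t^4+t^6)$), and ``no exotic classes survive'' plus bookkeeping will not distinguish them without an argument like the equivariant one above (or an explicit section / explicit knowledge that $H^*(M;V_{\text{fund}})=0$, which in the paper is a \emph{consequence} of the theorem, not an input). Also note that the extra marked point is not merely ``a bundle factor on each stratum'': the linear system $L(K)$ and the topology of the configuration space both change with the position of $p$ relative to $K$ (collinear, coplanar, equal), so the stratification itself must be refined, and carrying out that casework is a substantial part of the proof rather than a transport of the unmarked case.
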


The key tool in our proof of \cref{main-theorem} is simplicial resolution à la Vassiliev.
Considering the combinatorics of how the marked point is situated with respect to possible singularities on the surfaces makes the casework fairly complicated.
We devote all of \cref{case-work} to this computation, while \cref{proof-section} contains the rest of the proof.

\subsection{Applications: moduli space, representations of \texorpdfstring{$W(E_6)$}{W(E\_6)} and point counts}\label{applications}

We now give a few applications of \cref{main-theorem}.

\subsubsection*{Cohomology of moduli spaces}
The map $\pi \from U \to M$ is $\PGL(4,\CC)$-equivariant and each orbit (in either $M$ or $U$) is closed (see~e.g.~\cite{ACT02}).
Further, two cubic surfaces are isomorphic exactly when they are in the same $\PGL(4,\CC)$-orbit.
Thus, passing to the geometric quotient gives a bundle
\[\cU_{3,3} \to \cH_{3,3} \dispunct,\]
where
\[\cH_{3,3} \vcentcolon= M/\PGL(4, \CC)\]
is the moduli space of smooth cubic surfaces and
\[\cU_{3,3} \vcentcolon= U/\PGL(4, \CC)\]
is the moduli space of cubic surfaces equipped with a point.
The induced map $\cU_{3,3} \to \cH_{3,3}$ is the universal family of cubic surface up to isomorphism.

Note that both $\cH_{3,3}$ and $\cU_{3,3}$ are coarse moduli spaces.
For example the Fermat cubic defined by $x^3 + y^3 + z^3 + w^3$ equipped with the point $[1:-1:0:0]$ has non-trivial (but finite) stabilizer in $\PGL(4, \CC)$.
Using a theorem of Peters and Steenbrink (\cite[Theorem 2]{PS03}), which is a generalization of the Leray--Hirsch theorem, we have the following corollary.
\begin{corollary}\label{moduli-space}
The space $\cU_{3,3}$ has the rational cohomology of $\PP^2$:
\[H^i(\cU_{3,3}; \QQ) \cong \begin{dcases*}
\QQ & if $i = 0, 2$ or $4$;\\
0 & otherwise.
\end{dcases*}
\]
\end{corollary}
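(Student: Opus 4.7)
The plan is to apply the Peters--Steenbrink generalization of the Leray--Hirsch theorem (\cite[Theorem 2]{PS03}) to the $\PGL(4,\CC)$-action on $U$. Because this action has closed orbits with finite stabilizers, the geometric quotient $U \to \cU_{3,3}$ falls within the scope of that theorem, and it will produce a multiplicative isomorphism $H^*(U;\QQ) \cong H^*(\PGL(4,\CC);\QQ) \tensor H^*(\cU_{3,3};\QQ)$ as soon as one verifies that the pullback along any orbit map $\PGL(4,\CC) \to U$ is surjective on rational cohomology.

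Verifying this surjectivity is the main step. Fix any $(S_0, p_0) \in U$ and consider the orbit map $\phi \from \PGL(4,\CC) \to U$ sending $g \mapsto (g \cdot S_0, g \cdot p_0)$. Composing with the projection $\pi \from U \to M$ recovers the orbit map $g \mapsto g \cdot S_0$ to $M$, which induces an isomorphism on rational cohomology by the cited results of Vassiliev and Peters--Steenbrink. By \cref{main-theorem} the classes $\alpha_3, \alpha_5, \alpha_7 \in H^*(U;\QQ)$ arise as $\pi^*$-images of generators of $H^*(M;\QQ)$, so they restrict along $\phi$ to generators of $H^*(\PGL(4,\CC);\QQ)$. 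This gives the required surjectivity.

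Applying Peters--Steenbrink then yields $H^*(U;\QQ) \cong H^*(\PGL(4,\CC);\QQ) \tensor H^*(\cU_{3,3};\QQ)$ as graded vector spaces. Comparing with $H^*(U;\QQ) \cong H^*(\PGL(4,\CC);\QQ) \tensor H^*(\CC\PP^2;\QQ)$ from \cref{main-theorem} and dividing Poincar\'e polynomials by that of $\PGL(4,\CC)$ (which is the nonzero polynomial $(1+t^3)(1+t^5)(1+t^7)$) gives $\dim_\QQ H^i(\cU_{3,3};\QQ) = \dim_\QQ H^i(\CC\PP^2;\QQ)$ for every $i$, which is the content of the corollary. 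I do not anticipate a substantial obstacle: \cref{main-theorem} was arranged so that exactly the Leray--Hirsch classes needed to invoke Peters--Steenbrink are visible in $H^*(U;\QQ)$, and the remaining steps are routine.
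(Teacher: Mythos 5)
Your proposal is correct and follows the same route as the paper: the corollary is obtained by applying \cite[Theorem 2]{PS03} to the $\PGL(4,\CC)$-action on $U$, with the surjectivity of the orbit-map pullback $H^*(U;\QQ)\to H^*(\PGL(4,\CC);\QQ)$ coming from the fact that it factors through the isomorphism $H^*(M;\QQ)\cong H^*(\PGL(4,\CC);\QQ)$ of \cref{base-cohomology} (this is exactly the surjectivity the paper already records in the proof of \cref{main-theorem}), after which dividing Poincaré polynomials via \cref{main-theorem} gives $H^*(\cU_{3,3};\QQ)\cong H^*(\PP^2;\QQ)$.
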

For comparison, it was known previously that $\cH_{3,3}$ is $\QQ$-acyclic; see \cref{base-cohomology} below.

\subsubsection*{Monodromy and the normal cover with deck group \texorpdfstring{$W(E_6)$}{W(E\_6)}}
One way of trying to compute $H^*(U; \QQ)$ would be to use the fiber bundle $U \to M$.
Since the fiber over a surface $S \in M$ is exactly $S \subset \CC \PP^3$, this provides a spectral sequence
\[H^p(M; H^q(S)) \implies H^{p+q}(U) \dispunct,\]
where the coefficients are twisted by the monodromy action of $\pi_1(M)$ on
\[H^*(S; \QQ) = \begin{cases*}
\QQ & if $* = 0, 4$;\\
\QQ^7 & if $* = 2$;\\
0 & otherwise.
\end{cases*}\]
The monodromy action on $H^0$ and $H^4$ are of course trivial but the action on $H^2$ is quite interesting; to explore this we need a better description of $H^2(S)$.

There are different elements of $H^2(S)$ that can be described as the hyperplane class: the pullback $\eta$ of a generic hyperplane in $\PP^3$, which also equals the anticanonical class; or the strict transform $\lambda$ of a line when $S$ is identified with $\PP^2$ blown up at $6$ points.
Every cubic surface $S$ famously contains $27$ lines and a choice of any $6$ \emph{disjoint} lines out of the $27$ when blown down produces $\PP^2$ (see for instance \cite[Section V.4, specifically Proposition V.4.10]{Hartshorne77}).
It is then straightforward to see that the classes of $6$ such (disjoint) lines, along with either $\eta$ or $\lambda$ is a basis of $H^2(S)$.

The monodromy action keeps $\eta$ invariant since it preserves the embedding $S \subset \PP^3$, but it does not preserve the choice of lines---in fact it must be transitive on the choices of $6$ disjoint lines.
It does act by a finite group, the automorphism group of the intersection pairing of the $27$ lines, which can be identified as the Weyl group $W(E_6)$ of the root system $E_6$ (see \cite[Remark 23.8.2]{Manin86}, also \cite{Jordan89,Harris79}).
As a representation of $W(E_6)$, we get a decomposition of $H^2(S)$ into a one-dimensional trivial representation spanned by $\eta$ and a copy of the irreducible \emph{fundamental representation} of $W(E_6)$, denoted $V_{\text{fund}}$, spanned by the projections of any $6$ disjoint lines.
Thus,
\[H^p(M; H^2(S)) \cong H^p(M; \QQ\gen*{\eta}) \dsum H^p(M; V_{\text{fund}})\dispunct.\]

So to use the Serre spectral sequence, for $U \to M$, we would need to compute $H^p(M; V_{\text{fund}})$.
The finite quotient $\pi_1(M) \to W(E_6)$ corresponds to a normal cover $M(27)$ of $M$, whose points are given by decorating each $S \in M$ with a choice of ordering of the $27$ lines, consistent with some chosen intersection pattern.
Thus by transfer, we would need the multiplicity of $V_{\text{fund}}$ in $H^*(M(27); \QQ)$.
As the following corollary shows, it is in fact possible to turn this argument backwards and use \cref{main-theorem} to compute this multiplicity.

\begin{corollary}\label{fundamental-representation}
The fundamental representation $V_{\textup{fund}}$ of $W(E_6)$ does not appear in $H^*(M(27); \QQ)$.
\end{corollary}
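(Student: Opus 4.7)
The plan is to deduce the corollary from \cref{main-theorem} via the Leray--Serre spectral sequence of $\pi \from U \to M$ combined with a transfer argument for the finite normal cover $M(27) \to M$.

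Consider the Leray--Serre spectral sequence
\[E_2^{p,q} = H^p(M; \cH^q(S)) \implies H^{p+q}(U;\QQ),\]
where $\cH^q(S)$ is the local system on $M$ with fiber $H^q(S;\QQ)$. The local systems $\cH^0(S)$ and $\cH^4(S)$ are trivial of rank one, while $\cH^2(S)$ splits as $\QQ\gen*{\eta} \dsum V_{\text{fund}}$ into non-isomorphic irreducible $\pi_1(M)$-subsystems; accordingly the $E_2$-page splits into a trivial-coefficient part and a $V_{\text{fund}}$-coefficient part, and all differentials respect this splitting. Since $V_{\text{fund}}$-coefficients appear only in the $q=2$ row, there is no other $V_{\text{fund}}$-isotypic target or source available; the $V_{\text{fund}}$-part therefore survives unchanged to $E_\infty$.

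The key observation is that the trivial-coefficient part of $E_2$ already accounts for $H^*(U;\QQ)$ on its own. By \cref{main-theorem} the fiberwise classes $\eta$ and $\eta^2$ admit lifts $\iota^*(1 \tensor \eta), \iota^*(1 \tensor \eta^2) \in H^*(U;\QQ)$, so $\eta \in E_2^{0,2}$ and $\eta^2 \in E_2^{0,4}$ are permanent cycles; Leibniz together with the $H^*(M;\QQ)$-module structure of the spectral sequence forces the entire trivial-coefficient submodules $H^*(M;\QQ)\cdot\eta$ and $H^*(M;\QQ)\cdot\eta^2$ to survive as well. Combined with the base row $E_2^{*,0}$, this produces $3\dim H^*(M;\QQ)$ permanent classes. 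But \cref{main-theorem} also gives $H^*(U;\QQ) \cong H^*(M;\QQ) \tensor H^*(\CC\PP^2)$, whose total dimension is precisely $3 \dim H^*(M;\QQ)$. Hence there is no room for the independently-surviving $V_{\text{fund}}$-part, forcing $H^*(M; V_{\text{fund}}) = 0$.

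The corollary then follows by transfer: the normal cover $p \from M(27) \to M$ with deck group $W(E_6)$ gives an isomorphism $H^*(M(27);\QQ) \cong H^*(M; \underline{\QQ[W(E_6)]})$, and decomposing the regular representation into isotypic pieces (using that $V_{\text{fund}}$, like every rational representation of a finite Coxeter group, is self-dual) identifies the multiplicity of $V_{\text{fund}}$ in $H^*(M(27);\QQ)$ with $\dim H^*(M; V_{\text{fund}})$, which we just showed is zero. The substantive input is \cref{main-theorem}; the remaining spectral-sequence dimension count and the transfer step are formal. The only place that needs care is the claim that the entire trivial-coefficient part of the $q=2,4$ rows is generated over $H^*(M;\QQ)$ by permanent cycles --- which is precisely what the honest lifts coming from the inclusion $\iota \from U \to M \times \CC\PP^3$ in \cref{main-theorem} guarantee.
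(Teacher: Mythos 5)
Your overall strategy --- the Serre spectral sequence for $\pi \from U \to M$, the input $H^*(U;\QQ) \cong H^*(M;\QQ)\tensor\QQ[\eta]/(\eta^3)$ from \cref{main-theorem}, a dimension count, and transfer --- is the same as the paper's, and the transfer step at the end is fine. The gap is the assertion that ``all differentials respect the splitting'' of the $q=2$ row into $\QQ\gen*{\eta} \dsum V_{\text{fund}}$, from which you conclude that the $V_{\text{fund}}$-part automatically survives to $E_\infty$. That is not a formal property of the Serre spectral sequence: the splitting is a splitting of local systems on $M$, but the higher differentials are not induced by maps of local systems and need not preserve isotypic pieces. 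Concretely, after your (correct) Leibniz argument the only possibly nonzero differential is
\[
d_3 \from H^p(M; V_{\text{fund}}) \subset E_3^{p,2} \longrightarrow E_3^{p+3,0} = H^{p+3}(M;\QQ),
\]
which goes \emph{from} the $V_{\text{fund}}$-isotypic piece \emph{into} the trivial piece; nothing about the decomposition of the coefficients forbids this (compare the Gysin sequence of a non-orientable sphere bundle, where the analogous differential is cup product with a twisted Euler class and maps twisted coefficients to untwisted ones). With this $d_3$ unconstrained your dimension count does not close: if $r$ denotes its total rank, then $\dim E_\infty = 3\dim H^*(M;\QQ) + \dim H^*(M;V_{\text{fund}}) - 2r$, and equating this with $\dim H^*(U;\QQ) = 3\dim H^*(M;\QQ)$ only yields $\dim H^*(M;V_{\text{fund}}) = 2r$, not $0$. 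For the same reason ``permanent cycle'' is not enough to guarantee that your $3\dim H^*(M;\QQ)$ classes survive: it rules out outgoing differentials but not their becoming boundaries, which is exactly what a nonzero $d_3$ would produce on the base row.

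The missing step is supplied by using \cref{main-theorem} once more, which is what the paper's proof does implicitly when it says the trivial terms ``must survive till the $E_\infty$ page'': since $H^*(U;\QQ) \cong H^*(M;\QQ)\tensor\QQ[\eta]/(\eta^3)$ with the $H^*(M;\QQ)$-factor realized by $\pi^*$, the pullback $\pi^* \from H^*(M;\QQ) \to H^*(U;\QQ)$ is injective. The edge homomorphism on the base row is exactly $\pi^*$, and its kernel is exactly the part of $E_2^{*,0}$ killed by incoming differentials; injectivity therefore forces $d_3$ to vanish on every $H^p(M;V_{\text{fund}})$. Then all differentials vanish, $E_2 = E_\infty$, and your dimension count does give $H^*(M;V_{\text{fund}}) = 0$, after which your transfer argument correctly finishes the proof.
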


\begin{remark}
Some of the other irreducible representations of $W(E_6)$ are also precluded from occurring in $H^*(M(27))$, see \cite[Corollary 1.4]{Das18}, but this is not sufficient to determine $H^*(M(27))$ entirely.
\end{remark}

\begin{proof}[Proof of \cref{fundamental-representation}]
By Bezout's theorem, $\eta^2 = \eta\cup \eta \in H^4(S)$ is $3$ times the fundamental cohomology class of $S$ and of course $\eta^3 = 0$.
Moreover the pullback of a generic hyperplane to $U$ under the map $U \to \PP^3$ further pulls back to $\eta$ for every inclusion $S \subset U$, so we also denote this class by $\eta \in H^2(U)$.
By \cref{main-theorem}, $H^*(U) = H^*(M) \tensor \QQ[\eta]$.

But in the Serre spectral sequence for the bundle $U \to M$ from above, the $E_2$ page has three rows ($q = 0, 2, 4$), which consist of $H^p(M) \tensor \QQ\gen*{\eta^{q/2}}$ along with $H^p(M; V_{\text{fund}})$ on the $q = 2$ row.
There cannot be a non-zero differential mapping either into or out of $H^p(M) \tensor \QQ[\eta]$, since these terms must survive till the $E_\infty$ page and thus all the differentials vanish.
But $H^*(M) \tensor \QQ[\eta]$ already accounts for all of  $H^*(U)$, so we must have
\[H^p(M; V_{\text{fund}}) = 0\]
for each $p$.
But by transfer,
\[H^p(M; V_{\text{fund}}) \cong H^p(M(27)) \tensor_{W(E_6)} V_{\text{fund}} \dispunct,\]
so this irreducible representation cannot occur in any $H^p(M(27))$.
\end{proof}

\begin{remark}
The vanishing of the differentials is consistent with the bundle $U \to M$ having a (continuous) section.
In fact, the existence of such a section, along with the result that $H^*(M; V_{\text{fund}}) = 0$ would be sufficient to recover \cref{main-theorem}.
\end{remark}

\subsubsection*{Point counts over $\FF_q$}
The spaces $U$ and $M$ as defined are (the complex points of) quasiprojective varieties defined by integer polynomials.
To be more explicit, the discriminant $\Delta$ is an integer polynomial, as are the polynomials defining the incidence of a point and a cubic surface.
For a finite field $\FF_q$ of characteristic $p$, we can base change to $\FF_q$.
That is, reducing the defining polynomials $\operatorname{mod} p$ defines spaces
\[M(\FF_q) \subset \PP^{19}(\FF_q) \dispunct, \qquad \text{ and } \qquad U(\FF_q) \subset \PP^{19}(\FF_q) \times \PP^3(\FF_q) \dispunct,\]
and a projection map
\[\pi \from U(\FF_q) \to M(\FF_q) \dispunct.\]

For $p \ne 3$, the discriminant $\Delta$ continues to characterize singular polynomials, so $M(\FF_q)$ is the space of smooth cubic surfaces defined over $\FF_q$ (where a homogeneous cubic polynomial is smooth if it is smooth at all $\widebar{\FF}_q$ points).
Similarly, $U(\FF_q)$ is the space of pairs $(S,p)$ of smooth cubic surfaces $S$ and points $p$ defined over $\FF_q$ such that $p \in S$.
Thus, $\dfrac{\# U(\FF_q)}{\# M(\FF_q)}$ is the average number of $\FF_q$ points on a cubic surface defined over $\FF_q$.

For a smooth quasiprojective variety $Y$, the $\FF_q$ points are exactly the fixed points of $\Frob_q$ on $Y(\widebar{\FF}_q)$ and $\#Y(\FF_q)$ is determined by the Grothendieck--Lefschetz fixed point formula (see~e.g.~\cite{Milne13}):
\begin{equation}\label{GL-formula}
\# Y(\FF_q) = q^{\dim Y}\sum_{i \ge 0} (-1)^i \tr(\Frob_q \from H^i_{\et}(Y; \QQ_\ell)^\vee) \dispunct,
\end{equation}
where $\ell$ is a prime other than $p$.
Further, there are comparison theorems implying isomorphisms
\[H^i_\et(Y; \QQ_\ell) \cong H^i(Y(\CC); \QQ_\ell) \cong H^i(Y(\CC); \QQ) \tensor \QQ_\ell \dispunct,\]
away from a finite set of characteristics (see~e.g.~\cite[Théorème 1.4.6.3, Théorème 7.1.9]{Deligne77}).
This formula lets us use our results to deduce consequences about $\#U(\FF_q)$.

Applying the fixed-point formula to each $S$ we get $\# S(\FF_q) = q^2 + (t+1)q + 1$, where $t$ is the trace of Frobenius on the complement of $\eta \in H^2(S)$ described above.
Frobenius must act on $H^2(S)$ by some element of $W(E_6)$, so the possible values of $t$ are given by the character of $W(E_6)$ on the fundamental representation, namely the set $\{-3,-2,-1,0,1,2,3,4,6\}$.
Serre asked (e.g. in \cite[Section 2.3.3]{Serre12}) which $t$ can occur over all surfaces $S$ defined over each $q$.

\begin{theorem}[Swinnerton-Dyer (\cite{SwinnertonDyer10}), Banwait--Fité--Loughran (\cite{BFL16})]
For $q = 2$, $3$ or $5$, the value $t = 6$ is impossible.
These are the only exceptions.
That is, for every other possible value of $t$ and $q$, there is some cubic surface over $\FF_q$ with that value of $t$.
\end{theorem}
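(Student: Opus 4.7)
The plan is to split the statement into two halves: the \emph{impossibility} claim that no smooth cubic surface $S/\FF_q$ achieves $t = 6$ when $q \in \{2, 3, 5\}$, and the \emph{existence} claim that all other $(q, t)$ combinations occur.

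For the impossibility, I would begin from the observation that the character of the fundamental representation of $W(E_6)$ takes the value $6 = \dim V_{\text{fund}}$ only at the identity element, since $V_{\text{fund}}$ is faithful (being the reflection representation modulo nothing). Thus $t = 6$ forces Frobenius to act trivially on the configuration of $27$ lines, meaning all $27$ are defined over $\FF_q$. Choosing any $6$ pairwise disjoint rational lines and blowing them down then exhibits $S$ as the blowup of $\PP^2_{\FF_q}$ at $6$ rational points in general position, i.e.\ no three collinear and not all on a conic. For $q = 2$ this is immediate: the predicted count $q^2 + 7q + 1 = 19$ already exceeds $|\PP^3(\FF_2)| = 15$. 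For $q = 3$, the classical bound $q+1 = 4$ on the size of an arc in $\PP^2(\FF_q)$ for odd $q$ rules out six points with no three collinear. For $q = 5$ the arc bound $q+1 = 6$ is met, but by Segre's theorem every $6$-arc in odd characteristic is a smooth conic, so the six points necessarily lie on a conic and general position fails.

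For existence, I would produce, for each remaining $(q, t)$, a smooth cubic over $\FF_q$ whose Frobenius lies in a conjugacy class of $W(E_6)$ with $V_{\text{fund}}$-character equal to $t$. My strategy would be three-layered: (a) for $q$ large, every conjugacy class in $W(E_6)$ is realized with positive density on $M(\FF_q)$, by a Chebotarev-style equidistribution argument controlled by the étale cohomology of the cover $M(27) \to M$; (b) for intermediate $q$, construct examples directly by an arithmetic choice of six points in $\PP^2(\widebar{\FF}_q)$ whose Frobenius orbit pattern realizes a prescribed element of the subgroup $S_6 \subset W(E_6)$ permuting the exceptional divisors, or by twisting well-understood surfaces such as diagonal and Fermat cubics; (c) for the residual small cases --- principally every $t \ne 6$ when $q \in \{2, 3, 5\}$, plus the handful of classes over $q = 7, 8, 9$ not caught by (b) --- carry out an exhaustive computer enumeration of cubics in $\PP^{19}(\FF_q)$, computing each Frobenius class via its permutation action on the (explicitly listed) $27$ lines.

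The main obstacle I anticipate is the existence half for small $q$: turning the density statement in (a) into an effective bound that limits the search in (c) to a manageable scale, and then verifying, one conjugacy class at a time, that each of the nine non-excluded character values is actually attained. This combinatorial--computational step is precisely where the contribution of \cite{BFL16} extends the original arguments of \cite{SwinnertonDyer10}, and reproducing it would occupy the bulk of a full proof.
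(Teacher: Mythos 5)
The paper does not prove this statement: it is imported verbatim from the cited references and is never used in any argument (it is context for \cref{average-points}), so there is no internal proof to compare against. Judged on its own, your impossibility half is correct and is essentially the known argument. Since the $6$-dimensional representation $V_{\text{fund}}$ of $W(E_6)$ is faithful and its character values are sums of six roots of unity, $t=6$ forces Frobenius to act trivially on $H^2(S)$, hence to fix each of the $27$ line classes, so all $27$ lines are defined over $\FF_q$; blowing down six pairwise disjoint rational lines gives a degree $9$ del Pezzo surface with rational points, hence $\PP^2_{\FF_q}$, so $S$ is the blow-up of six rational points in general position. Your three contradictions are all valid: the count $q^2+7q+1 = 19 > 15 = \#\PP^3(\FF_2)$ for $q=2$, the arc bound $q+1=4$ in $\PP^2(\FF_3)$ for $q=3$, and Segre's theorem forcing any $6$-arc in $\PP^2(\FF_5)$ to be a conic for $q=5$.

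The existence half, however, is where the real content of the cited theorem lies, and your proposal does not prove it; it is a programme. Step (a) is sound in principle --- the geometric monodromy of $M(27)\to M$ is all of $W(E_6)$, and Deligne's bounds yield a Chebotarev-type equidistribution --- but any effective version leaves a lower bound on $q$ far too large for the residual verification to be ``manageable'' without doing exactly the case analysis you defer. Steps (b) and (c) are not carried out at all: no explicit surfaces are produced, no Frobenius classes are computed, not even the easy sample case $q=4$, $t=6$ (blow up a hyperoval of $\PP^2(\FF_4)$: six points, no three collinear, too many to lie on a conic), nor the eight values $t\ne 6$ for $q\in\{2,3,5\}$, which are genuinely delicate at such small $q$. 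As you acknowledge, this realization problem is precisely the contribution of \cite{SwinnertonDyer10} and \cite{BFL16}, so as written your proposal establishes the impossibility direction but cites, rather than proves, the existence direction; for a self-contained proof that gap would have to be filled by the explicit constructions and finite enumeration you only sketch.
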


Fixing a $q$, the average number of over all $S$ has to be $q^2 + q + 1 + q (t_{\text{average}})$.
The following corollary of \cref{main-theorem} shows that $t_{\text{average}} = 0$.

\begin{corollary}\label{average-points}
There is a finite set of characteristics, so that for a fixed $q = p^d$ with $p$ not in this set,
\[\# U(\FF_q) = q^{10}(q^2+q+1)(q^4-1)(q^3-1)(q^2-1)\dispunct.\]
Thus the average number of points defined over $\FF_q$ on a smooth cubic surface defined over $\FF_q$ is exactly $\#\PP^2(\FF_q) = q^2 + q + 1$.
\end{corollary}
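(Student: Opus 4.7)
The plan is to apply the Grothendieck--Lefschetz trace formula \eqref{GL-formula} to $U$, reading off both the Betti numbers and the Frobenius eigenvalues directly from \cref{main-theorem}. To do so, I would first invoke the standard étale--singular comparison isomorphism, which holds away from a finite set of primes; we enlarge this set to include $p = 3$, so that the integer discriminant polynomial $\Delta$ continues to detect singularity of cubic surfaces in positive characteristic. Since \cref{main-theorem} asserts that $H^*(U;\QQ)$ carries a pure Tate mixed Hodge structure with each $\alpha_{2k-1}$ of bidegree $(k,k)$ and $\eta$ of bidegree $(1,1)$, the usual dictionary between mixed Hodge structures and Frobenius weights implies that $\Frob_q$ acts on $\alpha_3, \alpha_5, \alpha_7, \eta$ by $q^2, q^3, q^4, q$ respectively.

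Next I would enumerate the basis of $H^*(U;\QQ)$ given by the monomials $\alpha_3^a \alpha_5^b \alpha_7^c \eta^d$ with $a,b,c \in \{0,1\}$ and $d \in \{0,1,2\}$. Such a monomial sits in cohomological degree $3a+5b+7c+2d$ with Frobenius eigenvalue $q^{2a+3b+4c+d}$, hence eigenvalue $q^{-(2a+3b+4c+d)}$ on the dual. Noting that $(-1)^{3a+5b+7c+2d} = (-1)^{a+b+c}$, the alternating sum in \eqref{GL-formula} factors as a product of geometric sums:
\[\sum_{a,b,c,d} (-1)^{a+b+c} q^{-(2a+3b+4c+d)} = (1-q^{-2})(1-q^{-3})(1-q^{-4})(1+q^{-1}+q^{-2}).\]
Multiplying by $q^{\dim U} = q^{21}$ and simplifying gives the claimed expression $\#U(\FF_q) = q^{10}(q^2-1)(q^3-1)(q^4-1)(q^2+q+1)$.

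For the averaging statement, I would apply the same argument to $M$ itself. Since $H^*(M;\QQ) \cong H^*(\PGL(4,\CC);\QQ)$ via the orbit-map result of Peters--Steenbrink \cite{PS03} and this is again pure Tate with generators in bidegrees $(2,2), (3,3), (4,4)$, an analogous factorization yields $\#M(\FF_q) = q^{10}(q^2-1)(q^3-1)(q^4-1)$, so dividing cancels everything except $q^2+q+1 = \#\PP^2(\FF_q)$. The one point requiring care is the precise finite set of excluded primes: beyond $p = 3$, we must exclude primes where either the étale--singular comparison \cite[Théorème 1.4.6.3, Théorème 7.1.9]{Deligne77} or the algebraicity of the identification in \cref{main-theorem} fails. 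Since both exclusions are finite and no explicit bound is needed, I expect this bookkeeping to be the only substantive obstacle, and a routine one.
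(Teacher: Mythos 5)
Your proposal is correct and follows essentially the same route as the paper: the Grothendieck--Lefschetz formula \eqref{GL-formula} combined with the Tate-type purity from \cref{main-theorem}, with the excluded characteristics coming from the comparison theorems and $p=3$. The only cosmetic difference is that you expand the trace sum over the monomial basis explicitly and recompute $\#M(\FF_q)$ the same way, whereas the paper simply identifies $\#U(\FF_q)$ with $\#(M\times\PP^2)(\FF_q)$ and quotes $\#M(\FF_q) = q^4\cdot\#\PGL(4,\FF_q)$ from \cref{base-cohomology}; your arithmetic matches, since $q^4\cdot\#\PGL(4,\FF_q) = q^{10}(q^4-1)(q^3-1)(q^2-1)$.
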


To the best of our knowledge, the point count for $U(\FF_q)$ and the consequence about the average number of points is new.
The average number of points on \emph{irreducible} (but not necessarily smooth) cubic surfaces was known to also be $q^2+q+1$ by N.~Elkies (see \cite[Section 2.4]{Kaplan13}) using different methods.

\begin{proof}[Proof of \cref{average-points}]
By the Grothendieck--Lefschetz fixed point formula (\cref{GL-formula}) and \cref{main-theorem} we obtain
\[\# U(\FF_q) = \# (M \times \PP^2)(\FF_q) = q^4 \cdot (\#\PGL(4,\FF_q)) \cdot (\#\PP^2(\FF_q)) \dispunct.\qedhere\]
\end{proof}

\begin{remark}
For comparison, by \cref{base-cohomology},
\[\#M(\FF_q) = q^4 \cdot (\#\PGL(4,\FF_q)) \dispunct.\]
The $q^4$ factor arises from the difference in dimensions of $M$ and $\PGL(4)$.
\end{remark}

\subsection{Acknowledgments}
I thank Benson Farb for suggesting the problem and for his invaluable advice and comments throughout the composition of this paper.
I am also grateful to Weiyan Chen, Nir Gadish, Sean Howe, Akhil Mathew and Nathaniel Mayer for many helpful conversations.

\section{Rational cohomology of the incidence variety}\label{main-section}

\subsection{Definitions and setup}
Much of the following is analogous to \cite{Das18}, although here we are looking at the incidence variety of smooth cubic surfaces and points instead of \emph{lines}.
From now on we will work over the field $\CC$ of complex numbers.

Let $X = X_{3,3}$ be the space of \emph{smooth} homogeneous degree $3$ (complex) polynomials in $4$ variables, seen as a subset of $\CC[x,y,z,w]_3 \cong \CC^{20}$.
It will be important for us to note that smoothness of such a polynomial is defined by a `discriminant' $\Delta$: there is a homogeneous polynomial $\Delta \from \CC^{20} \to \CC$ with integer coefficients so that a polynomial $F \in \CC[x,y,z,w]_3$ is \emph{not smooth} if and only if $\Delta(F) = 0$.
Denoting the \emph{discriminant locus} by $\Sigma = \cV(\Delta) \subset \CC^{20}$,
\[X = \CC^{20} \setminus \Sigma \subset \CC^{20} \setminus \{0\} \dispunct.\]

Two polynomials $F$ and $F'$ in $\CC[x,y,z,w]_3$ define the same cubic surface ($\cV(F) = \cV(F')$) exactly when they are scalar multiples, that is, $F' = \lambda F$ for some $\lambda \in \CC^\times$.
Further, $F$ is smooth if and only if $\lambda F$ is smooth.
Thus we can quotient by $\CC^\times$ and get the space
\[M = X/\CC^\times \subset \PP^{19}\]
of smooth cubic surfaces.

Next we have the `incidence variety' of cubic polynomials and points
\[\Pi = \set{(F,p)}{F(p) = 0} \subset \CC[x,y,z,w]_3 \times \PP^3 \dispunct.\]
The preimage of $X$ under the projection $\pi \from (F,p) \mapsto F$ is the incidence variety of \emph{smooth} polynomials and points and will be denoted by $\widetilde X$.
Again taking the quotient by $\CC^\times$, we get the incidence variety of smooth cubic surfaces and points:
\begin{equation}\label{main-definition}
U = \widetilde X/\CC^\times = \set{(S,p) \in M \times \PP^3}{ p \in S} \subset M \times \PP^3\dispunct.
\end{equation}
The projection $U \to M$ is a fiber bundle, which we will also denote by $\pi$.

Each of the incidence varieties also comes equipped with another projection, to $\PP^3$; each of these maps is in fact a fiber bundle ($\Pi \to \PP^3$ happens to be a vector bundle).
We will denote the fiber over $p \in \PP^3$ in $\Pi$, $\widetilde X$ and $U$ by $\Pi_p \cong \CC^{19}$, $X_p$ and $U_p$ respectively.
To be explicit, $\Pi_p$ is the space of (not necessarily smooth) cubic polynomials that vanish at $p$, $X_p$ is the subset of \emph{smooth} cubic polynomials that vanish at $p$ and $U_p$ is the space of smooth cubic \emph{surfaces} that contain $p$.

All the spaces and maps above far fit into the following commuting diagram:
\begin{equation}\label{bundle-map-on-projective-space}
\begin{tikzcd}
X_p \arrow[rd, hook] \arrow[rrr, "\CC^\times"] & & & U_p \arrow[rd, hook] &  \\
 & \widetilde X \arrow[ld, "\pi"] \arrow[rrr, "\CC^\times"] & & & U \arrow[ld, "\pi"] \\
X \arrow[rrr, near end, "\CC^\times"] & & & M &  \\
 & \PP^3 \arrow[rrr, equal] \arrow[from=uu, crossing over] & & & \PP^3 \arrow[from=uu, crossing over]
\end{tikzcd}
\end{equation}

The actions of $\GL(4) \vcentcolon = \GL(4,\CC)$ on $\CC^4$ and $\PGL(4) = \GL(4)/(\CC^\times I)$ on $\PP^3$ induce actions on the spaces defined above: on $\Pi$, $X$ and $\widetilde X$ by $\GL(4)$; on $M$ and $U$ by $\PGL(4)$.
Fixing a point $p \in \PP^3$, the respective stabilizers in $\GL(4)$ and $\PGL(4)$ act on the fibers $X_p$ and $U_p$.
Choose a basepoint $(F_0,p_0) \in \widetilde X$ and set $S_0 = \cV(F_0)$ so that $(S_0,p_0) \in U$.
Then the actions produce orbit maps $g \mapsto g(S_0,p_0) = (g\cdot S_0, g \cdot p_0)$ and so on.
Since all the actions are compatible by construction, we also have the following commuting `cube':
\begin{equation}\label{bundle-map-orbit}
\begin{tikzcd}
\Stab_{\GL(4)}(p) \arrow[rrd, hook, dotted] \arrow[r] \arrow[d] & X_p \arrow[rrd, hook, dotted] \arrow[d] \\
\Stab_{\PGL(4)}(p) \arrow[rrd, hook, dotted] \arrow[r] & U_p \arrow[rrd, hook, dotted] & \GL(4) \arrow[r] \arrow[d]& \widetilde X \arrow[d]\\
& & \PGL(4) \arrow[r] & U\\
\end{tikzcd}
\end{equation}
All the horizontal maps are orbit maps, all the vertical maps are quotients by $\CC^\times$ and the diagonal dotted maps are inclusions of fibers over $p \in \PP^3$.

\begin{remark}
Since $\widetilde X$ and $X_p$ are connected, a different choice of basepoint $(F_0, p_0) \in \widetilde X$ does not change the orbit maps up to homotopy.
\end{remark}

\begin{theorem}[Vassiliev \cite{Vassiliev99}, Peters--Steenbrink \cite{PS03}] \label{base-cohomology}
The map $\PGL(4) \to M$ given by $g \mapsto g(S_0)$ induces an isomorphism
\[H^*(M; \QQ) \isom H^*(\PGL(4); \QQ) \dispunct.\]
\end{theorem}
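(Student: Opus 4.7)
The plan is to combine Vassiliev's computation of $H^*(M;\QQ)$ via simplicial resolution with the Peters--Steenbrink generalized Leray--Hirsch theorem, which will identify the resulting abstract isomorphism with the orbit map. Since $M$ is the complement in $\PP^{19}$ of the projective discriminant hypersurface, Alexander duality in $\PP^{19}$ reduces the computation of $H^*(M;\QQ)$ to the Borel--Moore homology of the discriminant locus.

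Following Vassiliev, the first step is to construct a simplicial resolution of the discriminant whose fiber over a singular cubic $\cV(F)$ is the simplex spanned by its singular locus in $\PP^3$. This resolution carries a natural filtration by the number and configuration of singular points, and the associated spectral sequence expresses the Borel--Moore homology of the discriminant in terms of twisted homology of configurations of points in $\PP^3$ together with the projective spaces of cubics constrained to be singular at those points. A careful case analysis using the classification of singular cubic surfaces causes the spectral sequence to collapse and produces an abstract isomorphism $H^*(M;\QQ) \cong H^*(\PGL(4,\CC);\QQ) \cong \QQ[\alpha_3,\alpha_5,\alpha_7]/(\alpha_3^2,\alpha_5^2,\alpha_7^2)$.

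To promote this abstract isomorphism to the statement that the orbit map $g \mapsto g \cdot S_0$ induces it, the key facts are that $\PGL(4,\CC)$ acts on $M$ with finite stabilizers (smooth cubic surfaces have finite automorphism groups) and that the geometric quotient $\cH_{3,3} = M/\PGL(4,\CC)$ is $\QQ$-acyclic. The Peters--Steenbrink generalization of Leray--Hirsch \cite{PS03} then applies to $M \to \cH_{3,3}$: its conclusion is that $H^*(M;\QQ)$ is freely generated over $H^*(\cH_{3,3};\QQ) = \QQ$ by classes restricting to a basis of $H^*(\PGL(4,\CC);\QQ)$ on each orbit. Since $\PGL(4,\CC)\cdot S_0 \cong \PGL(4,\CC)/\Aut(S_0)$ is a finite cover of $\PGL(4,\CC)$ and therefore rationally equivalent, the pullback along the orbit map is a rational cohomology isomorphism.

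The main obstacle is the combinatorial casework inside the Vassiliev resolution: enumerating the singularity configurations on singular cubic surfaces (isolated $A_n$, $D_n$, $E_n$ singularities, as well as degenerate strata where the surface contains an entire singular curve, becomes non-reduced, or factors as a union of lower-degree components) and tracking their twisted contributions through the spectral sequence is delicate. On the Peters--Steenbrink side the nontrivial technical point is verifying the hypothesis of their theorem, namely the existence of globally defined algebraic classes on $M$ that restrict to generators of $H^*(\PGL(4,\CC);\QQ)$ on every orbit; this is where Hodge theory and the purity of the weight filtration on $H^*(M;\QQ)$ (forced by the Vassiliev computation) play the essential role.
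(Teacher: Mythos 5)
The paper itself does not reprove this statement---it quotes it from Vassiliev and Peters--Steenbrink---and your overall route (Vassiliev's simplicial resolution for the abstract computation of $H^*(M;\QQ)$, plus the Peters--Steenbrink generalized Leray--Hirsch theorem to identify the isomorphism with the orbit map) is exactly the route those references take. The problem is in how you assemble the two ingredients. You feed in, as a ``key fact'', that $\cH_{3,3}=M/\PGL(4,\CC)$ is $\QQ$-acyclic. That acyclicity is not an independent input: in this circle of ideas (and explicitly in the paper, right after \cref{moduli-space}) it is a \emph{consequence} of the very statement you are proving, obtained by combining the orbit-map isomorphism with the Peters--Steenbrink theorem. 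As written, your promotion of the abstract isomorphism to the orbit-map statement is therefore circular, or at best rests on an assertion for which you give no independent argument or source.

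The non-circular assembly runs in the other order: (a) verify the hypothesis of \cite[Theorem 2]{PS03} for the quotient $M\to\cH_{3,3}$, namely that restriction to an orbit $H^*(M;\QQ)\to H^*(\PGL(4,\CC)\cdot S_0;\QQ)\cong H^*(\PGL(4,\CC);\QQ)$ is surjective---in \cite{PS03} this is done not via purity of $H^*(M)$ but by exhibiting explicit cycles in the discriminant (cubics singular at a fixed point, singular somewhere along a fixed line, singular somewhere on a fixed plane) Alexander-dual to the generators, the same device the paper adapts in \cref{surjection}; (b) conclude from the Leray--Hirsch theorem that $H^*(M;\QQ)$ is a free $H^*(\cH_{3,3};\QQ)$-module on classes restricting to a basis of $H^*(\PGL(4,\CC);\QQ)$; (c) invoke Vassiliev's count $\dim_\QQ H^*(M;\QQ)=\dim_\QQ H^*(\PGL(4,\CC);\QQ)=8$, which forces $H^*(\cH_{3,3};\QQ)=\QQ$ and hence that restriction to the orbit is an isomorphism; the orbit map itself is then an isomorphism because $\PGL(4,\CC)$ finitely covers the orbit $\PGL(4,\CC)/\Aut(S_0)$ (not the other way around, as you wrote) and right translation by the finite stabilizer acts trivially on rational cohomology. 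The $\QQ$-acyclicity of $\cH_{3,3}$ comes out as a corollary rather than going in as a hypothesis. Two further small corrections: Alexander duality should be applied in the affine cone $\CC^{20}$ and then descended to $\PP^{19}\setminus\cV(\Delta)$ via the $\CC^\times$-quotient as in \cref{hypersurface-complement}, since $\PP^{19}$ is not a sphere; and the strata of Vassiliev's resolution are indexed by the possible singular loci as subsets of $\PP^3$ (the eleven configurations listed in \cref{plan}), not by $A_n$, $D_n$, $E_n$ singularity types.
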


\subsection{Proof of \texorpdfstring{\cref*{main-theorem}}{Theorem 1.1} and the role of simplicial resolution}
\label{proof-section}
Vassiliev's computation of $H^*(M; \QQ)$ and $H^*(X; \QQ)$ (as in \cite{Vassiliev99}) starts with a reduction, via Alexander duality, to computing the (Borel--Moore) homology of the discriminant locus $\Sigma$.
The space $\Sigma$, the set of singular cubic surfaces, is itself highly singular and stratifies based on the singular locus of an $F \in \Sigma$.
This stratification then produces a spectral sequence converging to $\widebar H_*(\Sigma) = H_*^{\mathrm{BM}}(\Sigma)$ (Borel--Moore or compactly supported homology).

Similar to \cite{Das18}, we apply the same methods to each fiber $X_p$, over $p$, of the map $\widetilde X \to \PP^3$, since each is a `discriminant complement' in the vector space $\Pi_p$ of polynomials vanishing at $p$.
We then need to stratify $\Sigma_p = \Sigma \cap \Pi_p$ not just by what the singular loci are as subsets of $\PP^3$, but also how they are configured with respect to the point $p$.
These are the types and subtypes described in \cref{plan}.
For now we will assume that we can perform this computation (which takes up all of \cref{case-work}) and when needed we refer to the answer described in \cref{cohomology-of-X-p}.

\begin{lemma}[{\cite[Lemma 2.7]{Das18}}]\label{hypersurface-complement}
Let $f \from \CC^n \to \CC$ be a non-constant homogeneous polynomial of degree $d$, so that $\cV(f)$ is a conical hypersurface; denote its complement by $Y = \CC^n \setminus \cV(f)$.
Let $\PP Y = Y /\CC^\times = \PP^{n-1} \setminus \cV_\PP(f)$ be the complement of the projective hypersurface given by the same polynomial $f$.
Then $H^*(Y; \QQ) \cong H^*(\CC^\times) \tensor H^*(\PP Y)$.
\end{lemma}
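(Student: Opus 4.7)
The quotient map $q \from Y \to \PP Y$ is a principal $\CC^\times$-bundle, so the plan is to show that this bundle is rationally trivial in the Leray--Hirsch sense, giving the desired tensor decomposition. The only candidate obstruction is the Euler (or first Chern) class of the bundle, and the key observation is that the polynomial $f$ itself produces a cohomology extension of the fiber, which forces the Euler class to vanish rationally.

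More concretely, I would proceed as follows. First, since $\cV(f)$ is a hypersurface, $f$ restricts to a well-defined map $f \from Y \to \CC^\times$. Let $u \in H^1(\CC^\times; \QQ)$ be a generator, and set $\alpha \vcentcolon= f^*(u) \in H^1(Y; \QQ)$. Next, pick any point $y_0 \in Y$ and consider the $\CC^\times$-orbit inclusion $j \from \CC^\times \hookrightarrow Y$ given by $\lambda \mapsto \lambda y_0$; this is exactly a fiber of $q$. Because $f$ is homogeneous of degree $d$, the composition $f \circ j \from \CC^\times \to \CC^\times$ is $\lambda \mapsto \lambda^d f(y_0)$, which induces multiplication by $d$ on $H^1(\CC^\times; \QQ)$. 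Since $d \neq 0$, we conclude that $j^*\alpha$ is a generator of $H^1(\CC^\times; \QQ)$.

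Thus the classes $\{1, \alpha\}$ restrict on every fiber to a basis of $H^*(\CC^\times; \QQ)$. Applying the Leray--Hirsch theorem to the fiber bundle $\CC^\times \to Y \to \PP Y$ yields an isomorphism of $H^*(\PP Y; \QQ)$-modules
\[H^*(Y; \QQ) \;\cong\; H^*(\PP Y; \QQ) \tensor H^*(\CC^\times; \QQ)\dispunct,\]
which is the claim.

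The only place where something could go wrong is in verifying that $f$ truly restricts to a non-constant map on each fiber, but this is immediate from homogeneity together with $d \geq 1$, so I do not anticipate any real obstacle. (An alternative would be to run the Serre spectral sequence directly and observe that the differential $d_2 \from E_2^{0,1} \to E_2^{2,0}$ must vanish because $\alpha \in H^1(Y;\QQ)$ provides a permanent cycle restricting nontrivially to the fiber; this route gives the same conclusion but is slightly less clean than Leray--Hirsch.)
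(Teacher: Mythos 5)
Your proof is correct: the restriction $f \from Y \to \CC^\times$ pulls back the generator $u$ to a class $\alpha \in H^1(Y;\QQ)$ whose restriction to each $\CC^\times$-orbit is $d\cdot u \ne 0$ in $H^1(\CC^\times;\QQ)$, so $\{1,\alpha\}$ is a cohomology extension of the fiber and Leray--Hirsch gives $H^*(Y;\QQ) \cong H^*(\CC^\times;\QQ) \tensor H^*(\PP Y;\QQ)$. The paper itself does not reprove this lemma but cites \cite[Lemma 2.7]{Das18}, and your argument is exactly the standard one used there (equivalently, the observation that the rational Euler class of the $\CC^\times$-bundle $Y \to \PP Y$ dies because it is $d$-divisible by the class of the hypersurface), so there is nothing to add.
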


\begin{lemma}\label{surjection}
For a fixed $p \in \PP^3$, choose a complement hyperplane $H = \PP V$, with $V \subset \CC^4$.
Then $\GL(V) \subset \Stab_{\GL(4)}(p)$ acts on $X_p$.
For a choice of basepoint $F_0 \in X_p$, the orbit map $\GL(V) \to X_p$ given by $g \mapsto g(F_0) = F_0 \circ g$ induces a surjection
\[H^*(X_p; \QQ) \twoheadrightarrow H^*(\GL(V); \QQ) \cong H^*(\GL(3) ; \QQ) \dispunct.\]
\end{lemma}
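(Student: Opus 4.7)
The plan is to factor the orbit map $\GL(V) \to X_p$ through the larger $\GL(4)$-orbit map $\GL(4) \to X$, and invoke a standard fact about the restriction of cohomology along $\GL(3) \hookrightarrow \GL(4)$. Since every $g \in \GL(V)$ fixes $p \in \PP^3$ (acting trivially on the line $\CC \cdot p$), the image of the orbit map lies in $X_p$, so the orbit maps fit into a commutative square whose columns are the inclusions $\GL(V) \hookrightarrow \GL(4)$ and $X_p \hookrightarrow X$.

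The key step is to show that the bottom orbit map $\GL(4) \to X$ is itself a rational cohomology isomorphism. I would deduce this by lifting \cref{base-cohomology} (which handles the projectivization $\PGL(4) \to M$) along the $\CC^\times$-bundles $\GL(4) \to \PGL(4)$ and $X \to M$: by \cref{hypersurface-complement} the latter satisfies $H^*(X; \QQ) \cong H^*(\CC^\times; \QQ) \otimes H^*(M; \QQ)$, and since $H^*(\PGL(4); \QQ)$ is exterior on odd-degree generators the analogous splitting holds for $\GL(4)$. The orbit map is compatible with these bundle structures up to the map $\lambda \mapsto \lambda^3$ on $\CC^\times$-fibers (the action of the center on a cubic polynomial), which is a rational isomorphism on $H^*(\CC^\times; \QQ)$; comparing Serre sequences gives the desired iso on total spaces.

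With this in hand, I use that the standard block inclusion $\GL(3) \hookrightarrow \GL(4)$ induces the surjective quotient
\[H^*(\GL(4); \QQ) = \Lambda[\alpha_1, \alpha_3, \alpha_5, \alpha_7] \twoheadrightarrow \Lambda[\alpha_1, \alpha_3, \alpha_5] = H^*(\GL(3); \QQ)\]
killing $\alpha_7$ (visible from the fiber sequence $\GL(3) \to \GL(4) \to S^7$, whose Serre sequence degenerates for dimension reasons). Chasing the commutative square in cohomology, the composition $H^*(X) \to H^*(X_p) \to H^*(\GL(V))$ equals the surjective composition $H^*(X) \isom H^*(\GL(4)) \twoheadrightarrow H^*(\GL(V))$, which forces the middle map $H^*(X_p) \twoheadrightarrow H^*(\GL(V))$ to be surjective, as required.

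The main technical point is verifying that the orbit map $\GL(4) \to X$ is a rational cohomology isomorphism; while this follows by reassembling \cref{base-cohomology} and \cref{hypersurface-complement} together with the $\CC^\times$-bundle structures, one should check the $\CC^\times$-equivariance of the orbit map carefully. All remaining ingredients are standard and the final diagram chase is straightforward.
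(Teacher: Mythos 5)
Your argument is correct, but it takes a genuinely different route from the paper. The paper proves the lemma ``dually'': it identifies $H^*(X_p;\QQ)$ with $\widebar H_*(\Sigma_p)$ and $H^*(\GL(3);\QQ)$ with $\widebar H_*(\Mat(3)\setminus\GL(3))$ via Alexander duality, and then, following the proof of \cite[Lemma 7]{PS03}, exhibits explicit strata of $\Sigma_p$ (polynomials singular at $P$, singular somewhere on the line $L$, singular somewhere on the hyperplane $H$, for a flag $P \in L \subset H$) whose preimages under the extended map $\Mat(3) \to \Pi_p$ are rational multiples of the standard generating cycles for $\widebar H_*(\Mat(3)\setminus\GL(3))$; surjectivity on generators then gives surjectivity of the ring map. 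You instead factor the orbit map through $\GL(4) \to X$, upgrade \cref{base-cohomology} from the $\PGL(4) \to M$ statement to a rational cohomology isomorphism $H^*(X;\QQ) \cong H^*(\GL(4);\QQ)$ by comparing the two $\CC^\times$-bundles (the orbit map is equivariant for $\lambda \mapsto \lambda^3$ on fibers, which is a $\QQ$-cohomology isomorphism, so the spectral sequence comparison applies; note also that $g \mapsto F_0 \circ g$ covers $g \mapsto g^{-1}(S_0)$ rather than $g \mapsto g(S_0)$, which is harmless since it differs by the homeomorphism $g \mapsto g^{-1}$), and then quote the standard surjectivity of the restriction $H^*(\GL(4);\QQ) = \Lambda[\alpha_1,\alpha_3,\alpha_5,\alpha_7] \twoheadrightarrow \Lambda[\alpha_1,\alpha_3,\alpha_5] = H^*(\GL(3);\QQ)$ coming from the fibration $\GL(3) \to \GL(4) \to \CC^4\setminus\{0\}$; the commuting square then forces $H^*(X_p;\QQ) \to H^*(\GL(V);\QQ)$ to be surjective. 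Your route buys brevity and avoids touching the discriminant geometry at all, at the cost of leaning on the external input \cref{base-cohomology} for the full space $X$ and on the bundle-comparison argument you rightly flag as the point to check; the paper's route is more hands-on (explicit dual cycles in $\Sigma_p$) and stays within the Alexander-duality framework that the rest of \cref{case-work} uses anyway. Both establish exactly what is needed later, namely surjectivity for the Leray--Hirsch argument via \cite[Theorem 2]{PS03}.
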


\begin{proof}
Choose a basis of $V$ and denote the corresponding projective flag by $P \in L \subset H$.
This identifies $\GL(V)$ with $\GL(3, \CC)$.

As in the computation of $H^*(X_p; \QQ)$ in \cref{case-work}, it is important to identify, via Alexander duality, $H^*(X_p; \QQ)$ with $\widebar H_*(\Sigma_p)$ and similarly $H^*(\GL(3); \QQ)$ with $\widebar H_*(\Mat(3) \setminus \GL(3))$, where $\Mat(3)$ is the space of all $3 \times 3$ matrices.
The generators of $H^*(\GL(3); \QQ)$ (as a ring) are represented by the locus of matrices whose first $i$ columns are linearly dependent\footnote{For $i = 1$ this means the first column is $0$.
This description of the generators generalizes to $\GL(n) \subset M(n)$.}, for $i = 1, 2, 3$.

The orbit map extends to a map
\[\Mat(3) \to \Pi_p = X_p \cup \Sigma_p \dispunct.\]
It is enough to identify subspaces of $\Sigma_p$ that pull back to (a rational multiple of) the corresponding subspaces of $\Mat(2) \times \Mat(2)$.
Then by the proof of \cite[Lemma 7]{PS03} (which is the analogous statement for all singular polynomials, while $X_p$ restricts to polynomials vanishing at $p$), appropriate choices of subspaces are the sets of polynomials that are: (i) singular at $P$, (ii) singular at some point of $L$, (iii) singular at some point of $H$.
\end{proof}

\begin{remark}
The stabilizer of $p$ in $\PGL(4)$ deformation retracts to $\Stab(p,H) \cong \GL(V)$, for any choice of complement $H = \PP V$ above.
However, there isn't a way of extending the action of $\GL(V)$ on $X_p$ to an action of $\PGL(4)$ on $\widetilde X$.
\end{remark}

This allows us to apply Leray--Hirsch to $X_p \to X_p/\GL(V)$ by \cite[Theorem 2]{PS03}.
Knowing the Betti numbers of $X_p$ from \cref{cohomology-of-X-p} and using \cref{hypersurface-complement} to move from $X_p$ to $U_p$, we get the following.

\begin{corollary}\label{fiber-cohomology-ring}
As rings,
\[H^*(X_p; \QQ) \cong H^*(S^1 \times S^3 \times S^5 \times S^5; \QQ)\]
and
\[H^*(U_p; \QQ) \cong H^*(S^3 \times S^5 \times S^5; \QQ) \dispunct.\]
\end{corollary}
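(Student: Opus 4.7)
The plan is to extract the ring structure of $H^*(X_p;\QQ)$ from the $\GL(V)$-action on $X_p$, and then transfer the result to $U_p$ via the $\CC^\times$-quotient.

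First, I would apply the Peters--Steenbrink theorem \cite[Theorem 2]{PS03} to the action $\GL(V) \curvearrowright X_p$. Its key hypothesis---surjectivity of the orbit map on rational cohomology---is exactly \cref{surjection}, and its conclusion is a ring isomorphism
\[H^*(X_p;\QQ) \cong H^*(X_p/\GL(V);\QQ) \tensor H^*(\GL(V);\QQ).\]
Since $\GL(V) \simeq \GL(3,\CC)$ retracts onto $U(3)$, we have $H^*(\GL(V);\QQ) \cong \Lambda(\alpha_1,\alpha_3,\alpha_5)$, with Poincaré polynomial $(1+t)(1+t^3)(1+t^5)$.

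Next, using the Betti numbers recorded in \cref{cohomology-of-X-p} (which should give Poincaré polynomial $(1+t)(1+t^3)(1+t^5)^2$ for $X_p$), I would divide by the Poincaré polynomial of $\GL(V)$ to conclude that $H^*(X_p/\GL(V);\QQ)$ has rank one in degrees $0$ and $5$ and vanishes elsewhere. Any generator $\beta$ in degree $5$ then automatically satisfies $\beta^2 = 0$, since $H^{10}(X_p/\GL(V);\QQ) = 0$, so $H^*(X_p/\GL(V);\QQ) \cong H^*(S^5;\QQ)$ as rings. Combining,
\[H^*(X_p;\QQ) \cong H^*(S^5;\QQ) \tensor \Lambda(\alpha_1,\alpha_3,\alpha_5) \cong H^*(S^1 \times S^3 \times S^5 \times S^5;\QQ)\]
as rings, which is the first assertion.

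For $U_p$, I would apply \cref{hypersurface-complement} to the principal $\CC^\times$-bundle $X_p \to U_p$, which realises $X_p$ as the complement of a cone inside the vector space $\Pi_p$. This yields $H^*(X_p;\QQ) \cong H^*(\CC^\times;\QQ) \tensor H^*(U_p;\QQ)$, and cancelling the $S^1$ factor on the left gives $H^*(U_p;\QQ) \cong H^*(S^3 \times S^5 \times S^5;\QQ)$. The main delicate point throughout is ensuring each Leray--Hirsch-style decomposition is genuinely multiplicative rather than merely additive; this is precisely what the cited Peters--Steenbrink theorem is designed to provide, leveraging the exterior-algebra (odd-degree) structure of the fiber cohomology.
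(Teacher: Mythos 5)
Your proposal is correct and follows essentially the same route as the paper: apply the Peters--Steenbrink Leray--Hirsch theorem to the $\GL(V)$-action on $X_p$ (with surjectivity supplied by \cref{surjection}), combine with the Betti numbers from \cref{cohomology-of-X-p} to identify the quotient factor as $H^*(S^5;\QQ)$, and then use \cref{hypersurface-complement} to pass from $X_p$ to $U_p$. The only cosmetic difference is that you spell out the multiplicativity bookkeeping (which the paper leaves implicit), and in doing so you slightly overstate what \cite[Theorem 2]{PS03} gives directly---it yields the Leray--Hirsch module decomposition, with the ring identification then following because all generators are odd-degree classes over $\QQ$.
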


Now we can prove \cref{main-theorem}.
\begin{proof}[Proof of \cref{main-theorem}]\label{main-theorem-proof}
Let us supress rational coefficients for brevity.
Setting $G_p = \Stab_{\PGL(4)}(p) \simeq \GL(3)$, we have maps of bundles (as in \eqref{bundle-map-orbit}):
\[\begin{tikzcd}
G_p \arrow[rd, hook] \arrow[r] & U_p \arrow[rd, hook] \arrow[r] & M \arrow[rd, hook] \\
& \PGL(4) \arrow[r] \arrow[d] & U \arrow[r] \arrow[d] & M \times \PP^3 \arrow[d] \\
& \PP^3 \arrow[r,equal] & \PP^3 \arrow[r,equal] & \PP^3
\end{tikzcd}\]
The pair of horizontal maps on the left are orbit maps as described above and the pair on the right are inclusions.

Since the base is simply connected, we get spectral sequences
\[H^p(\PP^3) \tensor H^q(G_p) \implies H^{p+q}(\PGL(4))\dispunct; \qquad H^p(\PP^3) \tensor H^q(U_p) \implies H^{p+q}(U) \]
and, since the last bundle is trivial,
\[\dSum_{p+q = d} H^p(\PP^3) \tensor H^q(M) \cong H^d(M \times \PP^3) \dispunct.\]
Alternatively, all the differentials in the spectral sequence for the third bundle are $0$.
Since the Serre spectral sequence is natural, this will help us compute the differentials in the case of $U$.
We will also use our knowledge of the differentials in the $\PGL(4)$ case.

By \cref{base-cohomology}, $H^*(M)$ is isomorphic to $H^*(\PGL(4))$ via the orbit map.
This implies that the map $G_p \to M$ induces isomorphisms:
\[H^3(M) \cong H^3(G_p) \dispunct; \qquad H^5(M) \cong H^5(G_p) \dispunct.\]
In particular, the map
\[\QQ \cong H^3(M) \to H^3(U_p) \cong \QQ\]
is an isomorphism and
\[\QQ \cong H^5(M) \to H^5(U_p) \cong \QQ^2\] is injective.
Now, by the Leibniz rule for differentials in the Serre spectral sequence and the description of $H^*(U_p)$ from \cref{fiber-cohomology-ring}, it is enough to find the ranks of the differentials (see \cref{serre-SS})
\[d_4 \from H^3(U_p) \to H^4(\PP^3) \qquad \text{and} \qquad d_6 \from H^5(U_p) \to H^6(\PP^3) \dispunct.\]
By the isomorphism $H^3(M) \cong H^3(U_p)$, the differential $d_4$ vanishes but the injection $H^5(M) \to H^5(U_p)$ is not enough to determine if the differential $d_6$ has rank $1$ or $0$ (although the image of $H^5(M)$ must be in the kernel of $d_6$).

\begin{figure}
\centering
\begin{subfigure}[b]{0.4\linewidth}
\centering
\includegraphics{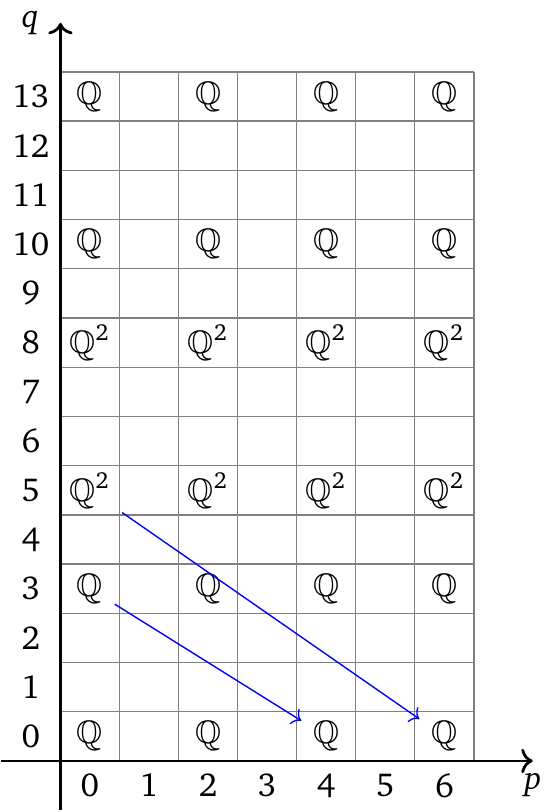}
\caption{pages $E_2 = \dots = E_6$}
\end{subfigure}
\begin{subfigure}[b]{0.4\linewidth}
\centering
\includegraphics{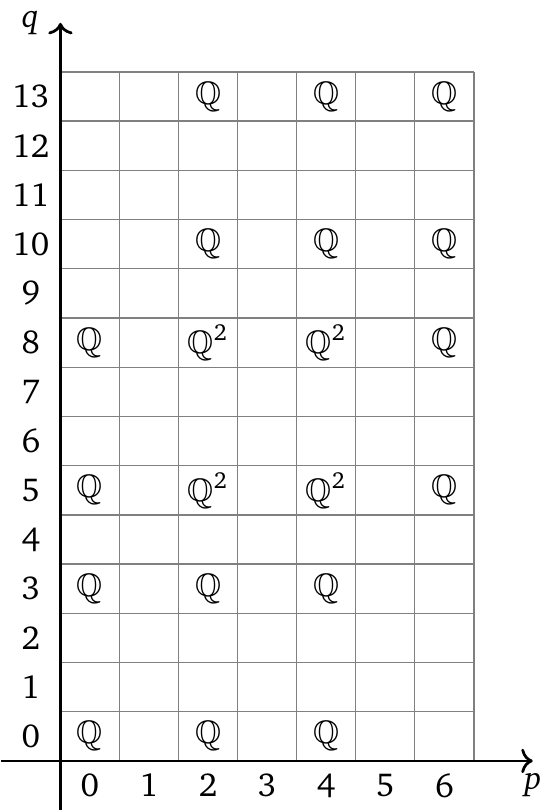}
\caption{pages $E_7 = \dots = E_\infty$}
\end{subfigure}
\caption{Serre spectral sequence for the bundle $U \to \PP^3$.
Only potentially non-zero differentials on \emph{generating degrees} are shown.}
\label{serre-SS}
\end{figure}

Since we are considering field coefficients, $H^*(U)$ is isomorphic to the associated graded as a vector space.
Thus, the Poincaré polynomial of $U$ is either
\[(1+t^3)(1+t^5)(1+t^7)(1+t^2 + t^4)\]
or
\[(1+t^3)(1+t^5)^2(1+t^2 + t^4 + t^6) \dispunct.\]
But the map $H^*(U) \to H^*(\PGL(4))$ is surjective (since the map $H^*(M) \to H^*(\PGL(4))$ is), so by yet another application of \cite[Theorem 2]{PS03}, the Poincaré polynomial of $U$ must be divisible by that of $\PGL(4)$, in particular by $(1+t^7)$.
This implies that the rank of $d^6$ is $1$ and that $H^6(\PP^3)$ is in the kernel of the pullback map for the fiber bundle $U \to \PP^3$.

Finally, to establish the ring structure, it is enough to note that in addition to the version of Leray--Hirsch from \cite{PS03}, the generators in degrees $3$, $5$ and $7$ cannot have any relations except those forced by graded commutativity since this is true for their images in $H^*(\PGL(4))$.
\end{proof}

\section{Rational cohomology of \texorpdfstring{$X_p$}{X(p)}}\label{case-work}

\subsection{Definitions and plan of attack}\label{plan}

We will suppress constant rational coefficients throughout this section and use $\widebar H$ to denote Borel--Moore homology.
Recall that for an orientable but not necessarily compact $2n$-manifold $M$, Poincaré duality takes the form
\[\widebar H_i(M) \cong H^{2n-i}(M) \cong (H_{2n-i}(M))^\vee \cong (H^{i}_c(M))^\vee \dispunct.\]

Recall that $X_p \subset \Pi_p \cong \CC^{19}$ and set $\Sigma_p = \Pi_p \setminus X_p = \Pi_p \cap \Sigma$, the set of \emph{singular} cubic polynomials that vanish at the point $p$.
Then by Alexander duality,
\begin{equation}\label{alexander-duality}
\widetilde H^i(X_p) = \widebar H_{37 - i}(\Sigma_p) \dispunct.
\end{equation}

\begin{remark}\label{andreotti--frankel}
The `discriminant locus' $\Sigma_p$ is a conical hypersurface in $\Pi_p$, being the vanishing locus of $\Delta_p = \Delta|_{\Pi_p}$.
The complex variety $X_p$, being the complement of a hypersurface, is affine and hence a $19$-dimensional Stein manifold.
Thus by the Andreotti--Frankel theorem, $H^i(X_p) = 0$ for $i > 19$.
Hence, by \cref{alexander-duality}, $\widebar H_{i}(\Sigma_p)$ can only be non-zero for $18 \le i \le 37$.
\end{remark}

Let $F \in \Sigma_p$ be a singular cubic polynomial and let $K$ be its singular locus.
Then $K$, as a subset of $\PP^3$, can be one of the following $11$ \emph{types} (see~\cite[Proposition 8]{Vassiliev99}):
\begin{singularity}
\item a point \label{point}
\item two distinct points \label{two-points}
\item a line
\item three points, not on a line \label{three-points}
\item a smooth conic contained in a plane $\PP^2 \subset \PP^3$
\item a pair of intersecting lines
\item four points, not on a plane \label{four-points}
\item a plane
\item three lines through a point, not all on the same plane
\item a smooth conic contained in a plane along with another point not on that plane
\item all of $\PP^3$ \label{everything}
\end{singularity}

These can be further classified into \emph{subtypes} depending on their configuration with respect to the marked point $p$.
This will not be relevant for most of the types; we list those that are relevant.
The names $P$, $Q$ etc.\ below for the points are for convenience, the sets of points are a priori \emph{unordered}: $\{P,Q\} = \{Q,P\}$ and so on.
\begin{singularity}
\item[(\ref{point})] a point $P$
\begin{singularity}[ref=\ref*{point}\alph*]
\item $P = p$\label{one-on}
\item $P \ne p$\label{one-off}
\end{singularity}
\item[(\ref{two-points})] two points $P$, $Q$
\begin{singularity}[ref=\ref*{two-points}\alph*]
\item $P = p$\label{two-on}
\item $P$ and $Q$ collinear with $p$, $P, Q \ne p$\label{two-collinear}
\item $P$ and $Q$ not collinear with $p$ \label{two-not-collinear}
\end{singularity}
\item[(\ref{three-points})] three points $P$, $Q$, $R$, not collinear
\begin{singularity}[ref=\ref*{three-points}\alph*]
\item $P=p$ \label{three-on}
\item $P$ and $Q$ collinear with $p$, $P, Q \ne p$\label{three-collinear}
\item $P$, $Q$ and $R$ coplanar with $p$, no two collinear with $p$\label{three-coplanar}
\item $P$, $Q$ and $R$ not coplanar with $p$\label{three-not-coplanar}
\end{singularity}
\item[(\ref{four-points})] four points $P$, $Q$, $R$, $S$, not coplanar
\begin{singularity}[ref=\ref*{four-points}\alph*]
\item $P=p$\label{four-on}
\item $P$ and $Q$ collinear with $p$, $P, Q \ne p$\label{four-collinear}
\item $P$, $Q$ and $R$ coplanar with $p$, no two collinear with $p$\label{four-coplanar}
\item no three coplanar with $p$\label{four-no-three-coplanar}
\end{singularity}
\end{singularity}

\begin{remark}
The \emph{types} correspond to singular loci that are equivalent under the $\PGL(4)$-action on $\PP^3$ and the \emph{subtypes} correspond to equivalence under the action of the subgroup $\Stab(p) \subset \PGL(4)$.
However, this will not be explicitly important for us.
\end{remark}

\begin{definition}\label{singular-polynomial}
For a singular locus $K$, denote by $L(K)$ the set of all polynomials in $\Sigma_p$ that are singular on all of $K$ (and perhaps elsewhere as well).
This is a vector space for any $K \subset \PP^3$.
\end{definition}

\begin{remark}
The subtypes are (partially) ordered by degeneracy: $i \le j$ if polynomials with singularity of subtype~$i$ can degenerate to a polynomial with singularity of subtype~$j$.
In the following we need to choose a rank function (i.e., monotonic integer-valued map) on this poset, we use
\[\deg(i) = 16 - \dim L(K)\]
for any $K$ of subtype $i$.
\end{remark}

\begin{definition}
For a manifold $M$ and natural number $n$, the \emph{ordered configuration space} of $n$ points on $M$ is given by
\[\PConf_n(M) \vcentcolon= \set*{(a_1,\dots,a_n) \in M^n}{a_i \ne a_j \text{ for } i \ne j} \dispunct.\]
This space comes with a natural action of the symmetric group $\fS_n$ by permuting the coordinates and the quotient is the \emph{unordered configuration space} $\UConf_n(M)$ of $n$ points on $M$.
\end{definition}

\begin{definition}
For any $A \subseteq \UConf_n(M)$, the \emph{sign local coefficients} on $A$, denoted by $\pm \QQ$, is given by the composition
\[\pi_1(A) \to \pi_1(\UConf_n(M)) \to \fS_n \to \{\pm 1\} \subset \QQ^\times\]
thought of as a representation on $\QQ$.
Explicitly, a loop in $A$ acts on $\QQ$ by the sign of the induced permutation on the $n$ points.
\end{definition}

The method of simplicial resolution ultimately produces for us a spectral sequence
\[E_{*,*} \implies \widebar H_{*}(\Sigma_p)\]
with the $E^1$ page described below.
For slightly more details see an entirely analogous description in \cite{Das18}; for proofs and constructions, see \cite{Vassiliev99}.

Let the index $i$ vary over all the \emph{subtypes} (not just the ones listed, but all of them).
Define
\[A_i \vcentcolon= \{\text{singular sets $K$ of subtype~$i$}\} \dispunct.\]

\begin{example}\label{parameter-space-example}
For the subtype~\ref{two-on}, a point $P = p$ and a point $Q \ne p$, we have $A_{\type{two-on}} = \{p\} \times (\PP^3 \setminus \{p\})$.
For the subtype~\ref{two-not-collinear}, two points not collinear with $p$, the space $A_{\type{two-not-collinear}}$ is an open set in $\UConf_2(\PP^3 \setminus \{p\})$.
\end{example}

There are spaces $F_i$ so that the $E^1$ page is given by
\begin{equation}\label{ss-terms}
E^1_{p,q} = \dSum_{\deg(i) = p} \widebar H_{p+q}(F_i) \dispunct.
\end{equation}
There are further spaces $\Phi_i$ and $\Lambda(K)$ as well as fiber bundles:
\[
  \begin{tikzcd}[column sep=tiny]
    L(K) \ar[hookrightarrow]{rr} & & F_i \dar[->>] \\
    \Lambda(K) \ar[hookrightarrow]{rr} & &\Phi_i \dar[->>] \\
 & K \rar[draw=none][description]{\in} & A_i
  \end{tikzcd}
\]
So to compute $\widebar H_*(F_i)$, we can use the Thom isomorphism
\begin{equation}\label{thom-isomorphism}
\widebar H_*(F_i) = \widebar H_{* -2 \dim_\CC L(K)} (\Phi_i) \dispunct.
\end{equation}

Unless $i$ is a subtype of \ref{point}, \ref{two-points}, \ref{three-points}, \ref{four-points} or \ref{everything}, $\widebar H_*(\Lambda(K)) = 0$ (\cite[proof of Proposition 9]{Vassiliev99}) and hence $\widebar H_* (\Phi_i) = 0$.
Now suppose $i$ is a subtype of \ref{point}, \ref{two-points}, \ref{three-points} or \ref{four-points}, i.e.\ $K \in A_i$ is a finite set of say $n$ points.
Then $A_i$ is a subset of $\UConf_n(\PP^3)$ and
\begin{equation}\label{local-coefficient-isomorphism}
\widebar H_*(\Phi_i) = \widebar H_{* - n + 1}(A_i; \pm \QQ) = H^{2\dim_\CC A_i + n - 1 - *}(A_i; \pm \QQ)   \dispunct.
\end{equation}

For the type \ref{everything} (note that \ref{everything} has only one subtype, itself), $A_{\type{everything}}$ is singleton, the only element being $K = \PP^3$.
The only polynomial singular on $K$ is $0$, so $L(K) = \{0\}$.
Thus $F_{\type{everything}} = \Phi_{\type{everything}} =  \Lambda(\PP^3)$.
Further, the space $\Phi_{\type{everything}} = \Lambda(\PP^3)$ is the open cone $\mathring{C} Z$ over
\[Z = \bigcup_{j \ne \type{everything}} \Phi_j\]
for certain gluings.
So we get a spectral sequence $e^r_{p,q} \implies H_{p+q} (Z)$ with
\[e^1_{p,q} = \dSum_{\substack{\deg(j) = p,\\ j \ne \type{everything}}} \widebar H_{p+q}(\Phi_j) \dispunct.\]
But then we also have
\[\widebar H_*(\mathring CZ) = H_*(CZ , Z) = \widetilde H_{*-1}(Z) \dispunct.\]

So the computation eventually reduces to computing $H^*(A_i; \pm \QQ)$ for the various subtypes of \ref{point}, \ref{two-points}, \ref{three-points} and \ref{four-points} (see~\cref{vanishing-cohomology,shape-of-page-1}) followed by bookkeeping and relatively standard arguments involving spectral sequences following \cite{Vassiliev99} (see~\cref{cohomology-of-X-p}).

\begin{remark}
We could keep track of the mixed Hodge structures throughout the entire computation, as in \cite{Tommasi05,Tommasi14} (see~also~\cite{Gorinov05}), but this ends up being unnecessary for our purposes.
\end{remark}

\subsection{Case work}

This section contains the details of the arguments to compute the various $H^*(A_i; \pm \QQ)$.
The main idea is decomposing these spaces as fiber bundles, where both the fiber and base are simpler.
In many instances the bases are $A_j$ for some lower $j$ and the computation is `inductive' or recursive.

First, a couple of general facts that we will use freely in the computation below:
\begin{lemma}[{\cite[Lemma 3.11]{Das18}}]\label{subspace-complement}
Let $H \cong \PP^k$ be a $k$-dimensional linear subspace of $\PP^n$ for some $0 \le k \le n$ and let $H^\perp$ be the (projectivized) orthogonal complement of $H$.
Then $\PP^n \setminus H$ deformation retracts to $H^\perp \cong \PP^{n-k-1}$.
\end{lemma}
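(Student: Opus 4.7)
The plan is to construct an explicit strong deformation retraction of $\PP^n \setminus H$ onto $H^\perp$ by a straight-line homotopy in homogeneous coordinates. First I would fix a Hermitian inner product on $\CC^{n+1}$ and write $H = \PP V$ for a $(k+1)$-dimensional subspace $V \subset \CC^{n+1}$, so that $\CC^{n+1} = V \dsum V^\perp$ with $\dim V^\perp = n - k$ and $H^\perp = \PP V^\perp \cong \PP^{n-k-1}$. Every nonzero vector in $\CC^{n+1}$ decomposes uniquely as $v + w$ with $v \in V$ and $w \in V^\perp$, and such a vector represents a point of $\PP^n \setminus H$ precisely when $w \ne 0$.

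The main step is to define
\[h \from (\PP^n \setminus H) \times [0,1] \to \PP^n \setminus H, \qquad h\mleft([v + w], t\mright) = [(1-t)v + w] \dispunct,\]
and then to verify, in order: (i) $h$ is well-defined on projective classes, since rescaling a representative $v + w$ by any $\lambda \in \CC^\times$ rescales $(1-t)v + w$ by the same $\lambda$; (ii) the image again lies in $\PP^n \setminus H$, because its $V^\perp$-component is still $w \ne 0$; (iii) $h(-,0)$ is the identity; (iv) $h(-,1)$ factors through $\PP V^\perp = H^\perp$; and (v) $h(x, t) = x$ for all $x \in H^\perp$ (where $v = 0$) and all $t$, so the inclusion $H^\perp \hookrightarrow \PP^n \setminus H$ is a section of the retraction. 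Continuity is inherited from that of the orthogonal projections onto $V$ and $V^\perp$ together with the quotient map $\CC^{n+1} \setminus \{0\} \to \PP^n$.

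There is no real obstacle here: the entire content of the argument is the observation that the $V^\perp$-component of any representative of a point of $\PP^n \setminus H$ is nonzero, which simultaneously makes the straight-line contraction well-defined on $\PP^n \setminus H$ and identifies the endpoint at $t = 1$ as a point of $H^\perp$. Equivalently, the retraction is the linear projection $\PP^n \dashrightarrow H^\perp$ with center $H$, which is a morphism precisely on the complement of its indeterminacy locus $H$.
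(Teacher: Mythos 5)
Your proof is correct: the straight-line homotopy $[v+w]\mapsto[(1-t)v+w]$ is well-defined, stays in $\PP^n\setminus H$ because the $V^\perp$-component never vanishes, and fixes $H^\perp$ pointwise, giving a strong deformation retraction. The paper itself offers no proof, merely citing \cite[Lemma 3.11]{Das18}, and your argument (equivalently, the linear projection with center $H$) is exactly the standard one behind that reference, so there is nothing to add.
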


\begin{lemma}\label{totaro-computations}
\[H^*(\UConf_2(\CC); \pm \QQ)
= 0 \dispunct.\]
\[H^*(\UConf_2(\PP^2); \pm \QQ) \cong \begin{cases*}
\QQ & if $* = 2, 4, 6$;\\
0 & otherwise.\end{cases*}\]
\end{lemma}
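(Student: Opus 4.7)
The plan is to use the transfer for the unramified double cover $\PConf_2(M) \to \UConf_2(M)$: rationally, $H^*(\UConf_2(M); \pm\QQ)$ is the sign-isotypic component of $H^*(\PConf_2(M); \QQ)$ under the swap $\mathfrak{S}_2$-action. It therefore suffices to compute $H^*(\PConf_2(M); \QQ)$ together with the swap action for $M = \CC$ and $M = \PP^2$, and read off the anti-invariants in each degree.

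For $M = \CC$, the map $(a,b) \mapsto (a+b,\, b-a)$ is a diffeomorphism $\PConf_2(\CC) \isom \CC \times \CC^\times$ intertwining the swap with $(u,v) \mapsto (u, -v)$. Since the antipodal map on $\CC^\times$ preserves the orientation of the generating loop, the swap acts trivially on $H^*(\PConf_2(\CC); \QQ)$, so the sign-isotypic component vanishes.

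For $M = \PP^2$ I would use the Gysin long exact sequence for the smooth codimension-two closed inclusion $\iota \colon \Delta \hookrightarrow \PP^2 \times \PP^2$:
\[\cdots \to H^{k-4}(\PP^2) \xrightarrow{\iota_*} H^k(\PP^2 \times \PP^2) \to H^k(\PConf_2(\PP^2)) \to H^{k-3}(\PP^2) \to \cdots\]
The pushforward sends $h^j \in H^{2j}(\Delta)$ to $x_1^j \cdot [\Delta]$, where $[\Delta] = x_1^2 + x_1 x_2 + x_2^2$ is the Künneth representative of the diagonal class in $H^*(\PP^2 \times \PP^2) = \QQ[x_1,x_2]/(x_1^3, x_2^3)$. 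A direct check shows $\iota_*$ is injective in each degree, so the sequence breaks into short exact sequences and yields Betti numbers $1,0,2,0,2,0,1$ for $\PConf_2(\PP^2)$ in degrees $0$ through $6$. The swap exchanges $x_1 \leftrightarrow x_2$ and fixes $[\Delta]$, and I would then decompose each graded piece: $H^2$ splits as $\QQ\langle x_1+x_2\rangle \oplus \QQ\langle x_1-x_2\rangle$; on the $2$-dimensional quotient giving $H^4(\PConf_2)$ the swap has trace $0$, forcing eigenvalues $\pm 1$; and in $H^6(\PConf_2)$ the surviving class is represented by $x_1^2 x_2$, which the swap sends to $x_1 x_2^2 \equiv -\, x_1^2 x_2$ modulo the image of $\iota_*$, hence is anti-invariant. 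Collecting sign parts gives $\QQ$ in degrees $2, 4, 6$, as claimed.

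The main obstacle is the bookkeeping for $M = \PP^2$: verifying injectivity of $\iota_*$ throughout the range and correctly pinning down the sign on each graded piece. Both reduce to finite linear algebra in the ring $\QQ[x_1, x_2]/(x_1^3, x_2^3)$, so while the computation has several steps, no single step is genuinely hard. An alternative would be to cite Totaro's formula for the rational cohomology of configuration spaces of smooth projective varieties and extract the sign-isotypic piece directly, but the Gysin approach keeps the $\mathfrak{S}_2$-action manifest throughout.
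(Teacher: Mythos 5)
Your proposal is correct. For $\UConf_2(\CC)$ it is essentially the paper's own argument: the paper identifies $\PConf_2(\RR^{2n})$ with $S^{2n-1}$ up to homotopy and notes the swap acts by the antipodal map, of degree $+1$, so the sign-isotypic (transfer) part vanishes; your model $\PConf_2(\CC)\cong\CC\times\CC^\times$ with the swap acting by $v\mapsto -v$ on the $\CC^\times$ factor is the same computation in coordinates. For $\UConf_2(\PP^2)$ the paper gives no argument at all --- it cites Vassiliev's Lemma~2B --- whereas you prove it directly via the Gysin sequence for the diagonal $\Delta\subset\PP^2\times\PP^2$. Your computation checks out: $\iota_*(h^j)=x_1^j(x_1^2+x_1x_2+x_2^2)$ is nonzero (hence injective) for $j=0,1,2$, so $j^*$ gives equivariant surjections $H^k(\PP^2\times\PP^2)\twoheadrightarrow H^k(\PConf_2(\PP^2))$ with kernel the visibly swap-stable span of $[\Delta], x_1[\Delta], x_1^2[\Delta]$ (note $x_1[\Delta]=x_2[\Delta]$ and $x_1^2[\Delta]=x_2^2[\Delta]$ in the quotient ring, which makes the swap-stability and the sign bookkeeping painless and sidesteps any worry about equivariance of the Thom class); the resulting Betti numbers $1,0,2,0,2,0,1$ agree with the fibration $\PConf_2(\PP^2)\to\PP^2$ with fiber $\PP^2\setminus\{pt\}\simeq\PP^1$, and the anti-invariants land exactly in degrees $2,4,6$ with multiplicity one. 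What your route buys is a self-contained, verifiable replacement for the citation, keeping the $\fS_2$-action explicit throughout; what the paper's route buys is brevity and consistency with its general reliance on Vassiliev's computations.
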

\begin{proof}
For $\UConf_2(\CC)$, we can use that $\PConf_2(\RR^{2n}) \simeq S^{2n-1}$ and the $\fS_2$ action is by the antipodal map, which is degree $1$.
Hence, by transfer, $H^*(\UConf_2(\RR^{2n}); \pm \QQ) = 0$.
For $H^*(\UConf_2(\PP^2); \pm \QQ)$ see \cite[Lemma~2B]{Vassiliev99}.
\end{proof}

Now we establish the cases where $H^*(A_i; \pm \QQ) = 0$, the recursive nature of the argument makes some of the cases relatively easy.
The remaining cases are treated in \cref{shape-of-page-1}.

\begin{proposition}\label{vanishing-cohomology}
If $i$ is \ref{two-collinear}, \ref{three-collinear}, \ref{three-coplanar}, \ref{four-collinear}, \ref{four-coplanar} or \ref{four-no-three-coplanar} then $H^*(A_i; \pm \QQ) = 0$.
\end{proposition}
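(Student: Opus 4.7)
The plan is to exhibit each $A_i$ as a fiber bundle over a simply-connected base whose fiber has vanishing rational cohomology with the sign local system; the result then follows from the Serre spectral sequence. Two different strategies handle the six cases.

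For \ref{two-collinear}, \ref{three-collinear}, and \ref{four-collinear}, a canonical pair of points collinear with $p$ can always be identified: for \ref{two-collinear} it is the pair itself, and for the other two I would verify uniqueness by a short combinatorial argument using the ambient non-collinearity or non-coplanarity conditions (for instance, in \ref{four-collinear} two disjoint collinear-with-$p$ pairs would force all four points into the plane spanned by the two lines, contradicting non-coplanarity). Fibering over the $\PP^2$ of lines $\ell$ through $p$, the fiber takes the form $\UConf_2(\ell \setminus \{p\}) \times F'$ for an auxiliary factor $F'$ carrying the remaining data. Because the sign local system on $A_i$ restricts to the external product of sign local systems on the two factors, Künneth together with the vanishing $H^*(\UConf_2(\CC);\pm\QQ) = 0$ from \cref{totaro-computations} kills the fiber cohomology.

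For \ref{three-coplanar} and \ref{four-no-three-coplanar} no such canonical pair exists, so I would instead project from $p$. In \ref{three-coplanar}, I fiber over the $\PP^2$ of planes $H$ through $p$, pass to the ordered cover of the fiber, and project each of the three points from $p$ to the $\PP^1$ of lines through $p$ in $H$; this displays the ordered cover as a $\CC^3$-affine bundle over $\PConf_3(\PP^1) \cong \PSL_2(\CC)$, using that the Möbius group acts simply transitively on ordered triples. In \ref{four-no-three-coplanar}, projection from $p$ similarly realizes the ordered cover as a $\CC^4$-affine bundle over the open locus of $\PConf_4(\PP^2)$ with no three points collinear, which is identified with $\PGL_3(\CC)$. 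By transfer, $H^*(A_i;\pm\QQ)$ equals the sign-isotypic part of $H^*$ of the relevant connected Lie group; since $\fS_n$ acts there by right multiplication and a path-connected topological group acts trivially on its own rational cohomology, this sign-isotype is zero. Finally \ref{four-coplanar} combines both ideas: its coplanar-with-$p$ triple is uniquely determined, so $A_{\type{four-coplanar}}$ bundles over $\PP^2$ with fiber $A_{\type{three-coplanar}}|_H \times (\PP^3 \setminus H)$ and the sign local system factors to be killed by the first factor via the \ref{three-coplanar} argument.

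The main obstacle is careful bookkeeping: verifying uniqueness of the distinguished subsets under each subtype's hypotheses; checking that each projection-from-$p$ construction is a genuine fiber bundle with the claimed affine-bundle fibers; and---most delicately---tracing that the restriction of the sign local system on $A_i$ to each fiber (and to each Künneth factor) really matches the sign representation of the corresponding subgroup of $\fS_n$. The core inputs---\cref{totaro-computations}, the identifications of configuration spaces with connected Lie groups, and the triviality of right multiplication on their rational cohomology---are all standard.
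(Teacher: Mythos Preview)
Your overall architecture matches the paper's: fiber each $A_i$ so that either a $\UConf_2(\CC)$ factor kills the sign cohomology, or pass to the ordered cover and show the symmetric-group action on its rational cohomology is trivial. For \ref{two-collinear}, \ref{three-collinear}, \ref{four-collinear}, and \ref{four-coplanar} your fibrations are essentially the paper's (the paper organizes \ref{three-collinear} and \ref{four-coplanar} recursively over $A_{\type{two-collinear}}$ and $A_{\type{three-coplanar}}$, but this is the same content).

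The gap is in \ref{three-coplanar} and \ref{four-no-three-coplanar}. Your projection-from-$p$ map does land in $\PConf_3(\PP^1)$ (resp.\ the general-position locus of $\PConf_4(\PP^2)$), but the fiber is \emph{not} the full affine space: the ambient type condition---$P,Q,R$ non-collinear for type~\ref{three-points}, and $P,Q,R,S$ non-coplanar for type~\ref{four-points}---is not implied by the subtype hypotheses and cuts out a hyperplane from your $\CC^3$ (resp.\ $\CC^4$). Concretely, for \ref{three-coplanar}: given three distinct lines through $p$ in $H\cong\PP^2$, any line in $H$ avoiding $p$ meets them in a collinear triple, so the fiber is $\CC^3$ minus a hyperplane, i.e.\ has the homotopy type of $\CC^\times$. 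This extra $H^1$ means your ``connected Lie group acts trivially on its own cohomology'' step no longer finishes the job; you would additionally need to analyze the $\fS_3$ (resp.\ $\fS_4$) action on this $H^1$, which is not covered by the right-multiplication argument. The paper sidesteps this by instead adjoining $p$ as an extra marked point: the ordered cover is then the fiber of $\PGL(3)\to\PP^2$ (resp.\ $\PGL(4)\to\PP^3$), on which the symmetric group genuinely acts by right multiplication and the triviality argument goes through cleanly.
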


\begin{proof}
Let's deal with each case in turn.

\begin{description}[wide,itemindent=*]

\item[\ref{two-collinear}. $P, Q \ne p$, but $P$, $Q$ and $p$ collinear]
Mapping $\{P,Q\} \mapsto L = \gen*{P,Q,p}$, the projective span of $P$, $Q$ and $p$, i.e.\ the line containing $P$, $Q$ and $p$, we get a map from $A_{\type{two-collinear}}$ to the space of lines in $\PP^3$ containing $p$, which is a $\PP^2 \subset G(1,3)$.
This is a fiber bundle
\[\begin{tikzcd}
\UConf_2(L \setminus p) \arrow[r,hook] & A_{\type{two-collinear}} \arrow[d,->>] \\ & \PP^2
\end{tikzcd}\]
and the local coefficients $\pm \QQ$ restricts to the fiber to the sign local coefficient on $\UConf_2(L \setminus p) \cong \UConf_2(\CC)$.
But $H^*(\UConf_2(\CC),\pm \QQ) = 0$ from \cref{totaro-computations}, so we are done.

\item[\ref{three-collinear}. $P, Q, R \ne p$, $P$, $Q$ and $p$ collinear, but $R$ not on that line]
Here, even though $P$, $Q$ and $R$ are a priori unordered, we can't (continuously) interchange $R$ with one of $P$ and $Q$.
So there is a well-defined map $\{P,Q,R\} \mapsto \{Q,R\}$ and we get a fiber bundle:
\[\begin{tikzcd}
\PP^3 \setminus \PP^1 \cong \PP^3 \setminus \gen*{P,Q,p} \rar[hookrightarrow] & A_{\type{three-collinear}} \dar[->>] \\ & A_{\type{two-collinear}}
\end{tikzcd}\]
The local coefficients $\pm \QQ$ on the total space pulls back from $\pm \QQ$ on base (that is, the map $\pi_1(A_{\type{three-collinear}}) \to \{\pm 1\}$ factors through $\pi_1(A_{\type{two-collinear}})$).
But as we just showed, $H^*(A_{\type{two-collinear}}; \pm \QQ) = 0$, so we are done.

\item[\ref{three-coplanar}. $P, Q, R \ne p$, coplanar with $p$ and no three of $P$, $Q$, $R$ and $p$ collinear]
Mapping
\[\{P,Q,R\} \mapsto H = \gen*{P,Q,R,p}\dispunct,\]
we get a fiber bundle:
\[\begin{tikzcd}
F \rar[hookrightarrow] & A_{\type{three-coplanar}} \dar[->>] \\ & \PP^2
\end{tikzcd}\]
The fiber is the space of three (unordered) non-collinear points on $H \setminus \{p\}$ and the local coefficients $\pm \QQ$ restricts to the local coefficients $\pm \QQ$ on $F \subset \UConf_3(\PP^2)$.
Since $\pi_1(F) \to \{\pm 1\}$ factors through $\fS_3$, we can go to the associated $\fS_3$ cover $\widetilde F \subset \PConf_3(\PP^2)$ and then, by transfer, $H^*(F; \pm \QQ)$ is the summand of $H^*(\widetilde F; \QQ)$ where $\fS_3$ acts by the sign representation.

But $\widetilde F$ can be identified with the fiber of $(P,Q,R,S) \mapsto S$, where $(P,Q,R,S)$ varies over all tuples in $\PConf_4(\PP^2)$ so that no three are collinear.
But $\PGL(3)$ acts freely and transitively on this open subset of $\PConf_4(\PP^2)$ and hence we have a fiber bundle:
\[\begin{tikzcd}
\widetilde F \rar[hookrightarrow] & \PGL(3) \dar[->>] \\ & \PP^2 \end{tikzcd}\]
The action of $\fS_3$ extends to $\PConf_4(\PP^2)$, permuting the first three points, so the action on the base is trivial.
The action on the total space extends to the action of the entire (connected) group $\PGL(3)$ by right multiplication, so is trivial on homology.
As a result, the $\fS_3$ action on $H^*(\widetilde F; \QQ)$ is trivial, which implies $H^*(F; \pm \QQ) = 0$, as needed.

\item[\ref{four-collinear}. $P$, $Q$, $R$ and $S$ not coplanar, $P$ and $Q$ collinear with $p$]
Note that if the four points are not coplanar, at most one pair can be collinear with $p$, so this determines the subset $\{P,Q\} \subset \{P,Q,R,S\}$.
The line $L = \gen*{P, Q, p}$ can be any line through $p$ that is not on the plane $\gen*{R, S, p}$ and fixing $L$, $P$ and $Q$ vary exactly in $\UConf_2(L \setminus \{p\}) \cong \UConf_2(\CC)$.
Thus mapping $\{P,Q,R,S\} \mapsto (L, \{R,S\})$ we get a fiber bundle:
\[\begin{tikzcd}
\UConf_2(L \setminus \{p\}) \arrow[r,hook] & A_{\type{four-collinear}} \dar[->>] \\
& \{(L, \{R,S\})\}
\end{tikzcd}\]
But again $H^*(\UConf_2(\CC),\pm \QQ) = 0$ from \cref{totaro-computations}, so we are done.

\item[\ref{four-coplanar}. $P$, $Q$ and $R$ coplanar with $p$, $S$ not on that plane and no two collinear with $p$]
Mapping
\[\{P,Q,R,S\} \mapsto \{P,Q,R\}\] we get a fiber bundle:
\[\begin{tikzcd}
\CC^3 \cong \PP^3 \setminus \gen*{P,Q,R} \rar[hookrightarrow] & A_{\type{four-coplanar}} \dar[->>] \\
& A_{\type{three-coplanar}}
\end{tikzcd}\]
Since $H^*(A_{\type{three-coplanar}},\pm \QQ) = 0$ by previous arguments, we are done.

\item[\ref{four-no-three-coplanar}. $P, Q, R, S \ne p$, no three coplanar with $p$]
By an argument analogous to the case of \ref{three-coplanar}, $A_{\type{four-no-three-coplanar}}$ has an $\fS_4$ cover by ordering the four points.
This cover is the fiber of the bundle $\PGL(3) \to \PP^3$, where $\PGL(3)$ is identified with five (ordered) points in $\PP^3$, no four of which are coplanar, by its free and transitive action.
The action of $\fS_4$ is again trivial on the base and on $H^*(\PGL(3))$, so we are done. \qedhere
\end{description}
\end{proof}

Recall that we have spectral sequences $E^r_{p,q} \implies \widebar H_{p+q} (\sigma)$ and $e^r_{p,q}$ that let us compute $\widebar H_*(F_{\type{everything}}) = \widetilde H_{*-1}(Z)$, where
\[Z = \bigcup_{i \ne \type{everything}} \Phi_i \dispunct.\]

\begin{proposition}\label{shape-of-page-1}
The spectral sequence $E^r_{p,q} \implies \widebar H_{p+q} (\sigma)$ has the page $E^1_{p,q}$ as in \cref{big-SS}.
The spectral sequence $e^r_{p,q} \implies H_{p+q} (Z)$ has the page $e^1_{p,q}$ as in \cref{small-SS}.
\end{proposition}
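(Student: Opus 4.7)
The plan is to work subtype by subtype, using the general machinery summarized in \cref{plan}: the Thom isomorphism \eqref{thom-isomorphism} and the local-coefficient identification \eqref{local-coefficient-isomorphism}. For each subtype $i$, the contribution to $E^1$ sits in column $p = \deg(i) = 16 - \dim_{\CC} L(K)$, and the entry in bidegree $(p,q)$ is $\widebar H_{p+q}(F_i)$. By the discussion preceding \eqref{local-coefficient-isomorphism}, this vanishes unless $i$ is a subtype of \ref{point}, \ref{two-points}, \ref{three-points}, \ref{four-points} or $i = \type{everything}$; moreover \cref{vanishing-cohomology} kills a further six subtypes. So the surviving subtypes are \ref{one-on}, \ref{one-off}, \ref{two-on}, \ref{two-not-collinear}, \ref{three-on}, \ref{three-not-coplanar} and \ref{four-on}, together with \type{everything}.

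For each surviving subtype, the first step is to compute $H^*(A_i; \pm\QQ)$. The easy cases are immediate: $A_{\type{one-on}} = \{p\}$ is a point, while $A_{\type{one-off}} = \PP^3 \setminus \{p\}$ and $A_{\type{two-on}} \cong \{p\} \times (\PP^3 \setminus \{p\})$ both deformation retract to $\PP^2$ by \cref{subspace-complement}, with the sign local system trivial since no two marked points can continuously swap. For \ref{two-not-collinear}, projection from $p$ yields a map $A_{\type{two-not-collinear}} \to \UConf_2(\PP^2)$; on the $\fS_2$-cover this is a bundle with contractible fibers $\CC \times \CC$, so the cohomology reads off from $H^*(\UConf_2(\PP^2); \pm\QQ)$ via \cref{totaro-computations}. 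The subtype \ref{three-on} reduces to \ref{two-not-collinear} since $P = p$ is fixed and the sign local system restricts appropriately. Subtypes \ref{three-not-coplanar} and \ref{four-on} are handled by completing the configuration with $p$ to an ordered tuple in general position in $\PP^3$ and invoking the free transitive $\PGL(4)$-action on such tuples; after transfer from the ordered $\fS_n$-cover the desired cohomology is expressed in terms of $H^*(\PGL(4))$ and $H^*(\PP^3)$, in direct analogy with the arguments given in \cref{vanishing-cohomology} for \ref{three-coplanar} and \ref{four-no-three-coplanar}.

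The second task is to pin down $\dim_\CC L(K)$ for each subtype, which is a linear algebra computation: count the codimension in $\Pi_p \cong \CC^{19}$ imposed by demanding that the polynomial and its partial derivatives vanish on $K$, keeping track of redundancies with the single condition $F(p) = 0$ already built into $\Pi_p$. This fixes the column $p = \deg(i)$. Combining with \eqref{thom-isomorphism} and \eqref{local-coefficient-isomorphism} places each surviving cohomology class in a specific bidegree $(p,q)$, and assembling the contributions column by column yields the $E^1$ page in \cref{big-SS}.

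Finally, the column for \type{everything} is handled by the auxiliary spectral sequence. Since $L(\PP^3) = \{0\}$, one has $F_{\type{everything}} = \Lambda(\PP^3) = \mathring{C} Z$, so $\widebar H_*(F_{\type{everything}}) = \widetilde H_{*-1}(Z)$ is computed from $e^r_{p,q} \Rightarrow H_{p+q}(Z)$. The $e^1$ page of \cref{small-SS} therefore coincides with the $E^1$ page of \cref{big-SS} after deleting the column corresponding to \type{everything}. The main obstacle I expect is the combined bookkeeping together with the configurational cohomology computations for \ref{three-not-coplanar} and \ref{four-on}: every surviving class must be placed in the correct bidegree by carefully tracking $\dim_\CC A_i$, $\dim_\CC L(K)$, and $n$, and the transfer arguments require ruling out spurious contributions from the sign twist.
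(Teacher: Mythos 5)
There is a genuine gap, and it sits exactly where you flagged the difficulty: the subtypes \ref{three-not-coplanar} and \ref{four-on}. Your plan is to complete the configuration with $p$ and invoke a ``free transitive $\PGL(4)$-action,'' in direct analogy with the vanishing arguments for \ref{three-coplanar} and \ref{four-no-three-coplanar}. But completing with $p$ gives only $4$ points in general position in $\PP^3$ (for \ref{four-on} the point $p$ is already one of the four), and $\PGL(4)$ acts on ordered $4$-tuples spanning $\PP^3$ transitively but \emph{not} freely: the stabilizer is a maximal torus $(\CC^\times)^3$. A projective frame in $\PP^3$ needs $5$ points, which is precisely why the frame argument works for \ref{four-no-three-coplanar} but not here. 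Worse, the ``direct analogy'' would conclude that the $\fS_n$-action on the cohomology of the ordered cover is trivial and hence that the sign-isotypic part vanishes; that would force $H^*(A_i;\pm\QQ)=0$ for these two subtypes, contradicting the nonzero entries in columns $p=9$ and $p=12$ of \cref{big-SS} that the proposition asserts. (If one insists on the orbit description, the ordered cover is a fiber of $\PGL(4)/T \to \PP^3$, and the symmetric-group action on $H^*(\PGL(4)/T)$ is governed by the Weyl group acting on the coinvariant algebra --- far from trivial --- so a genuinely different computation is needed.) The paper instead fibers $A_{\type{three-not-coplanar}}$ over the space of planes missing $p$, i.e.\ $(\PP^3)^\vee \setminus p^\perp \cong \CC^3$, with fiber the non-collinear unordered triples in $\PP^2$, and uses Vassiliev's comparison of that fiber with $\UConf_3(\PP^2)$ together with the known computation $H^*(\UConf_3(\PP^2);\pm\QQ)=\QQ$ concentrated in degree $6$; then $A_{\type{four-on}} = \{p\}\times A_{\type{three-not-coplanar}}$. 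Your write-up contains neither this input nor any substitute for it.

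A second, smaller omission: you never actually determine the $p=16$ column of $E^1$ (the subtype \type{everything}). Saying that $e^1$ is $E^1$ with that column deleted is only the construction; the proposition also asserts what that column \emph{is}, and the paper verifies it by an indirect argument: if any class of total degree $1 \le d \le 16$ survived to $e^\infty$, it would produce a term $E^1_{16,d-15}$ that cannot cancel against the already-determined columns, yielding $\widetilde H^{37-d}(X_p)\ne 0$ in degrees above $19$, contradicting the Andreotti--Frankel bound of \cref{andreotti--frankel}; hence $\widebar H_*(\Phi_{\type{everything}})\equiv 0$. Some argument of this kind (or an explicit computation of $\widetilde H_*(Z)$, which your outline does not supply) is needed to finish the statement. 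The remaining cases in your proposal (\ref{one-on}, \ref{one-off}, \ref{two-on}, \ref{two-not-collinear}, \ref{three-on}, and the bookkeeping via \eqref{thom-isomorphism} and \eqref{local-coefficient-isomorphism}) match the paper's argument.
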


\begin{figure}
\centering
\includegraphics{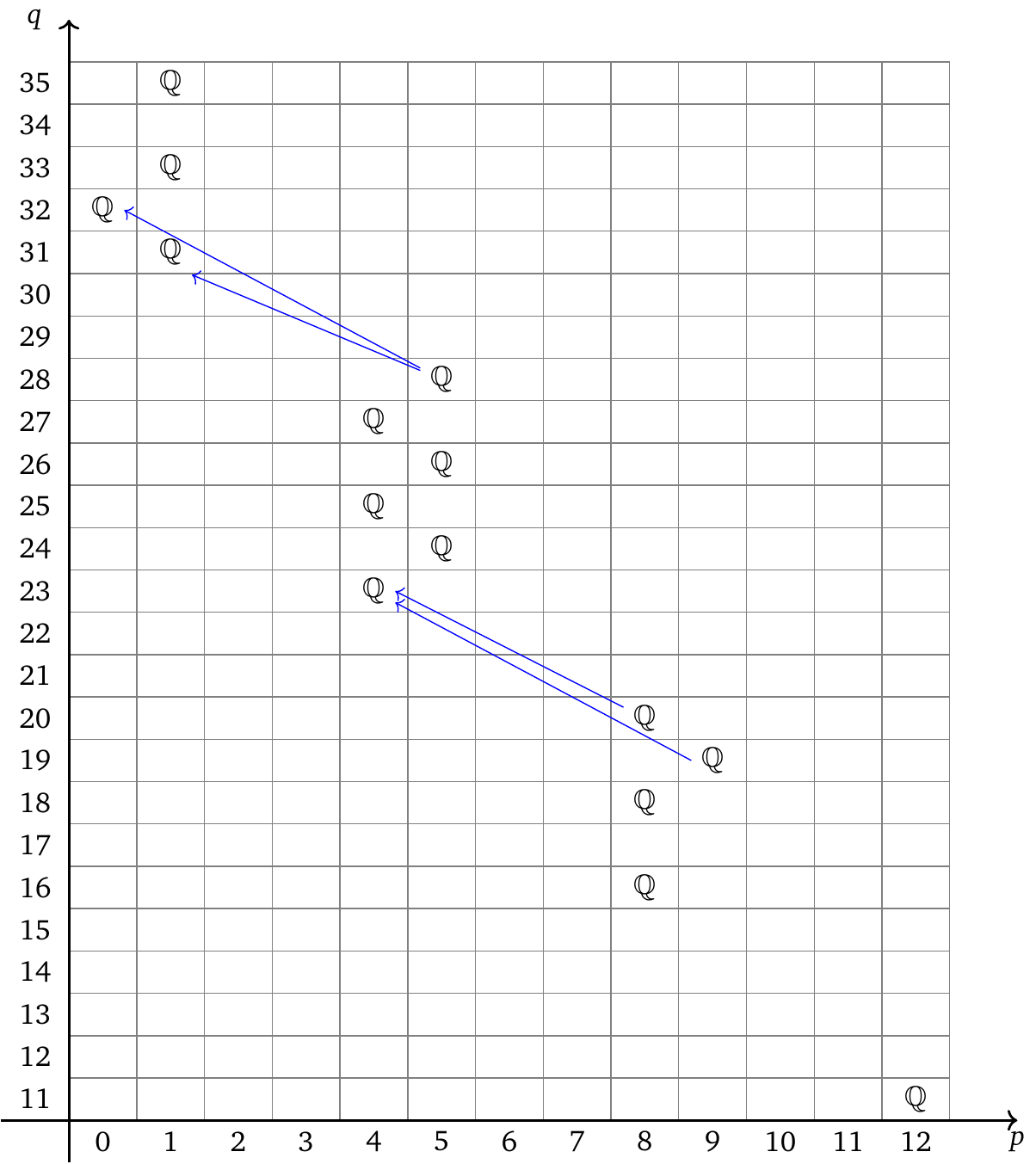}
\caption{Spectral sequence page $E^1_{p,q}$ for $\widebar H_{p+q} (\sigma)$ (with $0$s omitted) and all potentially non-zero differentials in subsequent pages}
\label{big-SS}
\end{figure}

\begin{figure}
\centering
\includegraphics{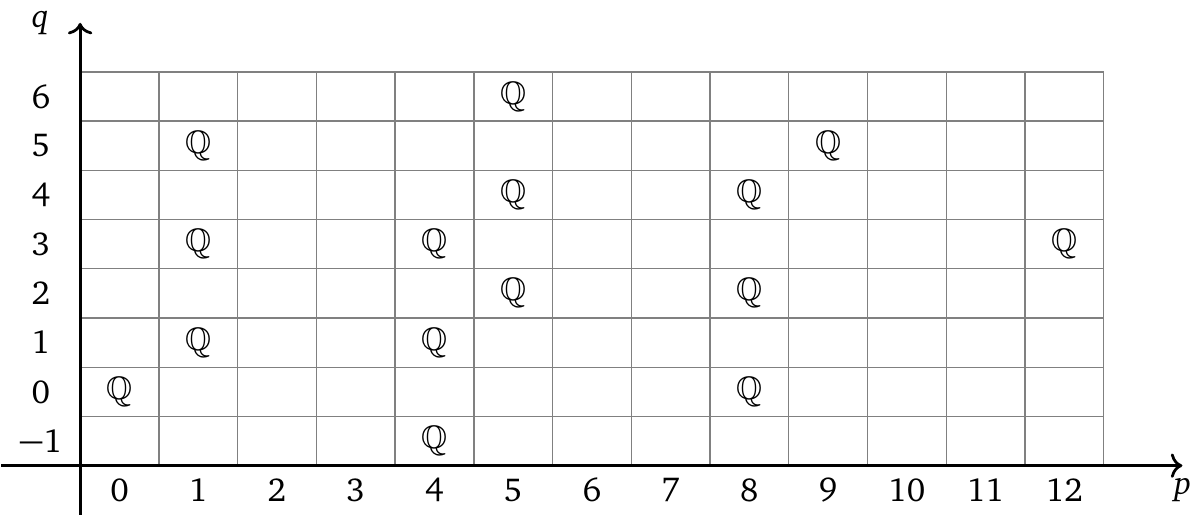}
\caption{Spectral sequence page $e^1_{p,q}$ for $H_{p+q}(Z)$ (with $0$s omitted)}
\label{small-SS}
\end{figure}

\begin{proof}
Recall that by construction, the terms of $E^1$ and $e^1$ are related by Thom isomorphisms
\[E^1_{p,q + 2(16-p)} \cong e^1_{p,q} \dispunct,\]
except for $p = 16$, where $e^1_{16,*} \equiv 0$.
So we first go through more case work to establish columns $p \ne 16$.

By \cref{ss-terms,thom-isomorphism,local-coefficient-isomorphism} and careful bookkeeping, it is enough to find $H^*(A_i; \pm \QQ)$ along with the numbers $\dim(A_i) = \dim_\CC(A_i)$ and $\dim(L(K)) = \dim_\CC(L(K))$ for $K \in A_i$, for the subtypes $i$ of \ref{point}, \ref{two-points}, \ref{three-points} and \ref{four-points} (see~\cref{numbers-table} for the relevant numerics).
Further, there are only seven subtypes remaining --- the ones not covered in \cref{vanishing-cohomology}.

\begin{table}
\centering
\begin{tabular}{crrrrrrrr}
\toprule
$i$ & \ref{one-on} & \ref{one-off} & \ref{two-on} & \ref{two-not-collinear} & \ref{three-on} & \ref{three-not-coplanar} & \ref{four-on} & \ref{everything}\\
\midrule
$\dim A_i$ & $0$ & $3$ & $3$ & $6$ & $6$ & $9$ & $9$ & $0$ \\
$\dim L(K)$ & $16$ & $15$ & $12$ & $11$ & $8$ & $7$ & $4$ & $0$ \\
\bottomrule
\end{tabular}
\caption{$\dim A_i$ and $\dim L(K)$ for $K \in A_i$ for each subtype $i$ excepted in \cref{vanishing-cohomology}.} \label{numbers-table}
\end{table}

\begin{description}[wide]
\item[\ref{one-on}. $P = p$]
$A_{\type{one-on}} = \{p\}$ and the coefficients $\pm \QQ$ are trivial, so
\[H^*(A_{\type{one-on}}; \pm \QQ) = H^*(\{p\}) = \begin{cases*} \QQ & if $* = 0$;\\ 0 & otherwise.\end{cases*}\]
This contributes to $E^1_{0,32} \cong e^1_{0,0}$ since $\dim(A_{\type{one-on}}) =  0$ and $\dim(L(K)) = 16$.

\item[\ref{one-off}. $P \ne p$]
$A_{\type{one-off}} = \PP^3 - p \simeq \PP^2$.
Again, the coefficients are trivial since there is only one point, so
\[H^*(A_{\type{one-off}}; \pm \QQ) = H^*(\PP^2) = \begin{cases*} \QQ & if $* = 0, 2, 4$;\\ 0 & otherwise.\end{cases*}\]
This contributes to $E^1_{1,31} \cong e^1_{1,1}$, $E^1_{1,33} \cong e^1_{1,3}$ and $E^1_{1,35} \cong e^1_{1,5}$ since $\dim(A_{\type{one-off}}) =  3$ and $\dim(L(K)) = 15$.

\item[\ref{two-on}. $P=p$, $Q \ne p$]
$A_{\type{two-on}} = \{p\} \times \PP^3 \setminus \{p\} \cong (\PP^2)$ and the coefficients are trivial.
So,
\[H^*(A_{\type{two-on}}; \pm \QQ) = \begin{cases*} \QQ & if $* = 0, 2, 4$;\\ 0 & otherwise.\end{cases*}\]
This contributes to $E^1_{4,23} \cong e^1_{4,-1}$, $E^1_{4,25} \cong e^1_{4,1}$ and $E^1_{4,27} \cong e^1_{4,3}$ since $\dim(A_{\type{two-on}}) =  3$ and $\dim(L(K)) = 12$.

\item[\ref{two-not-collinear}. $P$, $Q$ and $p$ not collinear]
The three points not being collinear is equivalent to the lines $\gen*{P,p}$ and $\gen*{Q,p}$ being distinct (lines through $p$). Hence mapping $\{P,Q\} \mapsto \{\gen*{P,p},\gen*{Q,p}\}$, we get a fiber bundle
\[\begin{tikzcd}
 \CC^2 \cong (\gen*{P,p} \setminus p)\times(\gen*{Q,p} \setminus p)\rar[hookrightarrow] & A_{\type{two-not-collinear}} \dar[->>] \\
 & \UConf_2(\PP^2_p)
\end{tikzcd}\]
where $\PP^2_p \cong \PP^2$ is the space of lines through $p$.
The coefficients on the total space pull back from $\pm \QQ$ coefficients on the base, hence by \cref{totaro-computations},
\[H^*(A_{\type{two-not-collinear}}; \pm \QQ) \cong H^*(\UConf_2(\PP^2); \pm \QQ) = \begin{cases*} \QQ & if $* = 2, 4, 6$;\\ 0 & otherwise.\end{cases*}\]
This contributes to $E^1_{5,24} \cong e^1_{5,2}$, $E^1_{5,26} \cong e^1_{5,4}$ and $E^1_{5,28} \cong e^1_{5,6}$ since $\dim(A_{\type{two-not-collinear}}) = 6$ and $\dim(L(K)) = 11$.

\item[\ref{three-on}. $P = p$, $Q$ and $R$ not coplanar with $p$]
$A_{\type{three-on}} = \{p\} \times A_{\type{two-not-collinear}}$ and the coefficients pull back from the $\pm \QQ$ coefficients on $A_{\type{two-not-collinear}}$.
Hence,
\[H^*(A_{\type{three-on}}; \pm \QQ) = \begin{cases*} \QQ & if $* = 2, 4, 6$;\\ 0 & otherwise.\end{cases*}\]
This contributes to $E^1_{8,16} \cong e^1_{8,0}$, $E^1_{8,18} \cong e^1_{8,2}$ and $E^1_{8,20} \cong e^1_{8,4}$ since $\dim(A_{\type{two-on}}) =  6$ and $\dim(L(K)) = 8$.

\item[\ref{three-not-coplanar}. $P$, $Q$ and $R$ not coplanar with $p$]
Mapping $\{P,Q,R\} \mapsto \gen*{P,Q,R}$, we get a fiber bundle whose base is $(\PP^3)^\vee \setminus p^\perp \cong \CC^3$ and the fiber is the space of non-collinear triples of points in $\PP^2$, whose cohomology with $\pm \QQ$ coefficients is the same as that of $\UConf_3(\PP^2)$, by \cite[Lemma 4]{Vassiliev99}.
Thus, using \cref{totaro-computations},
\[H^*(A_{\type{three-not-coplanar}}; \pm \QQ) = \begin{cases*} \QQ & if $* = 6$;\\ 0 & otherwise.\end{cases*}\]
This contributes to $E^1_{9,19} \cong e^1_{9,5}$ since $\dim(A_{\type{three-not-coplanar}}) = 9$ and $\dim(L(K)) = 7$.

\item[\ref{four-on}. $P = p$, $Q$, $R$ and $S$ not coplanar with $p$]
$A_{\type{three-on}} = \{p\} \times A_{\type{three-not-coplanar}}$ and the coefficients pull back from the $\pm \QQ$ coefficients on $A_{\type{three-not-coplanar}}$.
Hence
\[H^*(A_{\type{four-on}}; \pm \QQ) = \begin{cases*} \QQ & if $* = 6$;\\ 0 & otherwise.\end{cases*}\]
This contributes to $E^1_{12,11} \cong e^1_{12,3}$ since $\dim(A_{\type{four-on}}) = 9$ and $\dim(L(K)) = 4$.
\end{description}

Thus we've computed the pages $e^1_{p,q}$ and $E^1_{p,q}$ except the $p = 16$ column of the latter.
For \ref{everything}, $L(K) = 0$, so $E^1_{16,q} \cong \widebar H_{16+q}(\Phi_{\type{everything}})$.
Now, if any term with $1 \le d = p+q \le 16$ remains non-zero in $e^\infty_{p,q}$, then it would appear as $\widebar H_{d+1}(\Phi_{\type{everything}})$ and hence as a term $E^1_{16, d-15}$, which cannot interact with any of the other terms, by the shapes of the other columns, which we have already determined.
That means $0 \ne \widebar H_{d+1} (\sigma) \cong \widetilde H^{37-d} (X_p)$, which is a contradiction with $X_p$ being a $19$-dimensional Stein manifold, as in \cref{andreotti--frankel}.
This implies, given the shape of $e^1_{p,q}$, that $\widebar H_*(\Phi_{\type{everything}}) \equiv 0$, so we have also verified $E^1_{16,*}$.
\end{proof}

\begin{proposition}\label{cohomology-of-X-p}
The spectral sequence $E^r_{p,q}$ degenerates at $r = 1$ and hence the (rational) Poincaré polynomials of $X_p$ and $U_p$ are given by:
\begin{align*}
P(X_p; t) &= (1 + t)(1+t^3)(1+t^5)^2\\
P(U_p; t) &= (1+t^3)(1+t^5)^2
\end{align*}
\end{proposition}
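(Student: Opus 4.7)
The plan is to sandwich the Poincaré polynomial of $X_p$ between an upper bound coming from the $E^1$ page and a divisibility constraint coming from \cref{surjection}. First I would read off from \cref{shape-of-page-1} the contributions of all non-zero $E^1_{p,q}$, translating each to a contribution to $H^{37-p-q}(X_p)$ via Alexander duality (\cref{alexander-duality}), and check that the resulting total is $P_{\max}(t) \vcentcolon= (1+t)(1+t^3)(1+t^5)^2$. This serves as a coefficient-wise upper bound $P(X_p;t) \le P_{\max}(t)$, with equality if and only if the spectral sequence degenerates at $E^1$.

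Next I would invoke \cref{surjection} to get the surjection $H^*(X_p;\QQ) \twoheadrightarrow H^*(\GL(3);\QQ)$ and apply \cite[Theorem 2]{PS03} to the map $X_p \to X_p/\GL(V)$, forcing a factorization $P(X_p;t) = (1+t)(1+t^3)(1+t^5)\, Q(t)$ with $Q(t) \in \ZZ_{\ge 0}[t]$ and $Q(0) = 1$. Combining this with the bound $P(X_p;t) \le P_{\max}(t) = (1+t)(1+t^3)(1+t^5)(1+t^5)$ and working up from $q_0 = 1$ using the degrees $t^2, t^7, t^{12}, t^{15}$ (and higher) in which $P_{\max}$ vanishes, I would show that $q_i = 0$ for all $i \notin \{0,5\}$ and $q_5 \in \{0,1\}$, so $Q(t) \in \{1,\, 1+t^5\}$.

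Finally, to decide between the two I would observe from \cref{big-SS} that the generator of $E^1_{12,11}$ (arising from subtype \type{four-on}) has neither a possible source nor a possible target for any non-zero $d_r$: for every $r \ge 1$, both $E^1_{12-r,\, 10+r}$ and $E^1_{12+r,\, 12-r}$ vanish, the latter using the $E^1_{16,*} \equiv 0$ part of \cref{shape-of-page-1}. Hence this class survives to $E^\infty$, forcing $H^{14}(X_p;\QQ) \ne 0$ and thus $q_5 = 1$. Then $P(X_p;t) = P_{\max}(t) = (1+t)(1+t^3)(1+t^5)^2$, which saturates the $E^1$ bound and so forces every differential to vanish, giving degeneration; the Poincaré polynomial of $U_p$ follows immediately from \cref{hypersurface-complement}. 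I expect the main obstacle to be the second step, which is a finite but somewhat intricate coefficient comparison that must rule out plausible-looking alternatives such as $Q = 1+t^6$, $1+t^8$, or $1+t^{10}$ using the specific degrees where $P_{\max}$ is zero.
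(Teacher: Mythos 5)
Your proposal is correct, and it reaches the degeneration by a genuinely different route than the paper. The paper's proof stays inside \cref{case-work}: it uses only the factor $(1+t)$ coming from the $\CC^\times$-bundle (\cref{hypersurface-complement}, $P(X_p;t)=(1+t)P(U_p;t)$) together with the observation that every potentially non-zero differential in \cref{big-SS} has its source in total degree $33$ or its target in total degree $27$ --- i.e.\ all of them would kill the lone classes dual to $H^4(X_p)$ or $H^{10}(X_p)$ --- and then a short bootstrapping through the low Betti numbers (exactly the kind of coefficient chase you describe, hidden in the terse sentence ``cannot be $0$'') forces those two groups to be non-zero, hence all differentials vanish. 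You instead import the stronger divisibility $P(X_p;t)=(1+t)(1+t^3)(1+t^5)Q(t)$ from \cref{surjection} together with \cite[Theorem 2]{PS03}, combine it with the coefficient-wise bound $P(X_p;t)\le(1+t)(1+t^3)(1+t^5)^2$ from the $E^1$ page, and break the tie $Q\in\{1,1+t^5\}$ by noting that the class at $E^1_{12,11}$ (subtype \type{four-on}) is isolated --- no differential, in either direction, can involve it, so $H^{14}(X_p)\ne 0$; all three ingredients check out (your finite coefficient comparison does close, using the vanishing of $P_{\max}$ in degrees $2,7,12,15$ and above, and the isolation of $E^1_{12,11}$ is correct given $E^1_{16,*}\equiv 0$). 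This is not circular: \cref{surjection} and the Leray--Hirsch statement of \cite{PS03} are independent of \cref{cohomology-of-X-p}, which the paper invokes only afterwards to pin down the ring structure in \cref{fiber-cohomology-ring}. What each approach buys: the paper's argument needs the complete enumeration of potential differentials (the content of \cref{big-SS}) but only the cheap $(1+t)$ factor; yours needs to verify the isolation of just one entry, at the price of pulling the $\GL(3)$-surjection machinery forward --- a fair trade, and arguably a cleaner logical separation.
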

\begin{proof}
Recall that $E^r_{p,q} \implies \widebar H_{p+q} (\sigma) \cong \widetilde H^{37 - p - q}(\sigma)$.
The page $E^1_{p,q}$ is quite sparse to begin with, the only potentially non-zero differentials (on any page) are shown in \cref{big-SS}.
By \cref{hypersurface-complement}, since $X_p = \Pi_l \setminus \cV(\Delta_p)$, we must have
\[P(X_p; t) = P(\CC^\times; t)P(U_p; t) = (1+t)P(U_p; t) \dispunct.\]
This shows that $H^4(X_p) \cong \widebar H_{33}(\sigma)$ and $H^{10}(X_p) \cong \widebar H_{27}(\sigma)$ cannot be $0$, which means all those differentials must vanish.
So $E^\infty_{p,q} \cong E^1_{p,q}$ and there are no extension problems with rational coefficients.
It is then straightforward to factor the polynomials in the given manner.
\end{proof}

\printbibliography

\end{document}